\documentclass[a4paper,12pt]{amsart}

\usepackage{amsthm}
\usepackage{amsmath}
\usepackage{amsfonts}
\usepackage{amssymb}
\usepackage{latexsym}
\usepackage{times}
\usepackage{graphicx}
\usepackage{yhmath}

\textwidth=13cm
\textheight=19cm
\parindent=0.9cm

\theoremstyle{definition}
\newtheorem{defn}{\indent\bf Definition}
\newtheorem{rem}[defn]{\indent\bf Remark}

\newtheorem{obs}[defn]{\indent\bf Observation}
\newtheorem{prob}[defn]{\indent\bf Problem}

\theoremstyle{plain}

\newtheorem{prop}[defn]{\indent\bf Proposition}
\newtheorem{thm}[defn]{\indent\bf Theorem}
\newtheorem{cor}[defn]{\indent\bf Corollary}

\begin{document}

\title[On unbounded, non-trivial Hochschild cohomology]{On unbounded, non-trivial Hochschild cohomology in finite von Neumann algebras\\
and higher order Berezin's quantization}
\author[Florin R\u adulescu]{Florin R\u adulescu${}^*$
  \\ \\ 
Dipartimento di Matematica\\ Universita degli Studi di Roma ``Tor Vergata''}
\dedicatory{Dedicated to Professor \c Serban Valentin Str\u atil\u a, on the occasion of his 70th anniversary}
\maketitle 

\thispagestyle{empty}

\def\tilde{\widetilde}
\def\a{\alpha}
\def\T{\theta}
\def\PSL{\mathop{\rm PSL}\nolimits}
\def\SL{\mathop{\rm SL}\nolimits}
\def\PGL{\mathop{\rm PGL}}
\def\Per{\mathop{\rm Per}}
\def\GL{\mathop{\rm GL}}
\def\Out{\mathop{\rm Out}}
\def\Int{\mathop{\rm Int}}
\def\Aut{\mathop{\rm Aut}}
\def\ind{\mathop{\rm ind}}
\def\Im{\mathop{\rm Im}}
\def\Tr{\mathop{\rm Tr}}
\def\Re{\mathop{\rm Re}}
\def\Dom{\mathop{\rm Dom}}
\def\card{\mathop{\rm card}}
\def\d{{\rm d}}
\def\Z{\mathbb Z}
\def\R{\mathbb R}
\def\bC{\mathbb C}
\def\bD{\mathbb D}
\def\cR{{\mathcal R}}
\def\tPsi{{\tilde{\Psi}}}
\def\Q{\mathbb Q}
\def\N{\mathbb N}
\def\C{\mathbb C}
\def\bH{\mathbb H}
\def\Y{{\mathcal Y}}
\def\E{{\mathcal E}}
\def\L{{\mathcal L}}
\def\G{{\mathcal G}}
\def\M{{\mathcal M}}
\def\U{{\mathcal U}}
\def\F{{\mathcal F}}
\def\H{{\mathcal H}}
\def\I{{\mathcal I}}
\def\A{{\mathcal A}}
\def\S{{\mathcal S}}
\def\O{{\mathcal O}}
\def\V{{\mathcal V}}
\def\D{{\mathcal D}}
\def\B{{\mathcal B}}
\def\K{{\mathcal K}}
\def\cC{{\mathcal C}}
\def\cR{{\mathcal R}}
\def\T{{\mathcal T}}
\def\cX{{\mathcal X}}
\def\ptimes{\mathop{\boxtimes}\limits}
\def\potimes{\mathop{\otimes}\limits}

\begin{abstract}
We introduce a class of densely defined, unbounded, 2-Hoch\-schild cocycles ([PT])
on finite von Neumann algebras $M$. Our cocycles admit a coboundary,  determined by an unbounded operator on the standard 
Hilbert space associated to the von Neumann algebra $M$.
For the cocycles associated to the $\Gamma$-equivariant deformation ([Ra]) of
the upper halfplane $(\Gamma=\PSL_2(\Z))$, the ``imaginary" part of the coboundary ope\-rator is
a cohomological obstruction -- in the sense that it can not be removed by a "large class" of closable derivations, with
non-trivial real part, that  have a joint core domain, with the given coboundary.

As a byproduct,
we prove a strengthening of the non-triviality of the Euler cocycle in the bounded cohomology ([Br]) $H^2_{\rm bound}(\Gamma,\Z)$.

\end{abstract}

\renewcommand{\thefootnote}{}
\footnotetext{${}^*$ Member of the Institute of  Mathematics ``S. Stoilow" of the Romanian Academy}
\footnotetext{${}^*$  
Supported in part by PRIN-MIUR, and PN-II-ID-PCE-2012-4-0201
}

\section*{Introduction}

In this paper we analyze from an abstract point a view a cohomological obstruction that appears
in the study of the $\Gamma$-equivariant Berezin's deformation quantization of the upper half
plane. In this paper $\Gamma=\PSL_2(\Z)$, or a congruence subgroup.

In the case of the $\Gamma$-equivariant Berezin's quantization of the upper halfplane $\bH\subseteq \C$ considered in [Ra1], [Ra2], (see also [NN], [Bi]), the deformation consists in a family of type $II_1$ von Neumann  algebras
$(\A_t)_{t>1}$ indexed by  the parameter   $t\in (1,\infty)$, and a symbol map which we describe below. Each algebra $(\A_t)_{t>1}$   is embedded in the bounded operators acting on the  Hilbert space $H_t $,  consisting of analytic functions on the upper halfplane, square integrable with respect to a measure depending on $t>1$.

These Hilbert spaces are endowed with projective  unitary representations  
 $\pi_t:\PSL_2(\R)\rightarrow B(H_t), t>1,$
from the extended discrete series (see [Pu]), of projective unitary representations of $\PSL_2(\R)$. In this representation,  the algebra $\A_t$ is the commutant  algebra
$\{\pi_t(\Gamma)\}'$. The algebras $\A_t$  are finite von Neumann algebras (factors, i.e they have trivial centers),  as shown in [GHJ], [Ra1].

As observed in [GHJ], the automorphic forms correspond to intertwiners for the representations $\pi_t$, when restricted to $\Gamma$. Using
the canonical branch for the logarithm of the Dedekind $\Delta$ function,
 the multiplication operators by the powers $\Delta^{\varepsilon}$, $\varepsilon>0$, of $\Delta$, give intertwining operators for  the representations $\pi_{t}\vert\Gamma$, $\pi_{t_+\varepsilon}\vert\Gamma$ acting on the Hilbert spaces  $H_t$, and respectively $H_{t+\varepsilon}$. We denote these
injective, linear, bounded, intertwining multiplication operators by $S_{t+\varepsilon,t}$. These operators are compared 
with the canonical inclusions $I_{t+\varepsilon,t}$, $H_t\rightarrow H_{t+\varepsilon}$.

Out of this data (see  [Ra2]), one obtains a family of completely positive maps
$\Psi_{s,t}:\A_t\rightarrow\A_s, s\geq t>1$, the Berezin symbol map.

 Concomitantly, using the intertwiners described above, one obtains   a family $$\beta_{s,t}:\A_t\rightarrow\A_s, \beta_{s,t}(a)= S_{s,t}a(S_{s,t})^\ast,  s\geq t>1, a\in \A_t$$ 
of completely positive maps. Consequently, the linear map on $\A_t$ with values in $\A_s$, defined by the formula:
$$
\alpha_{s,t}(a)=\beta_{s,t}(1)^{-1/2}\beta_{s,t}(a)\beta_{s,t}^{-1/2}(1),\ \  a\in \A_s
$$ 
is an isomorphism from $\A_t$ onto $E_t^s \A_sE_t^s$, where $E_t^s$
is a projection, of trace $\frac{\chi_t}{\chi_s}$, in $\A_s$. More precisely $E_t^s$ is the orthogonal projection on the closure of the image of  $S_{s,t}$. Note that, 
$\beta_{s,t}(1)^{1/2}$ is the absolute value of the intertwining operator  $S_{s,t}$. Clearly,
$\alpha_{s,t}$ is an isomorphism onto its image.
Here $\chi_t, t>1,$ is a linear,  increasing function, with positive values, depending on
 $t$, related to the Plancherel dimension (coefficient) of $\pi_t$ (see e.g. [Pu]). 

In this paper we initiate the axiomatization of the properties of the Berezin  deformation quantization, properties which were used in ([Ra2])  to construct a cohomological data for the deformation. This cohomological data consists of a family of unbounded Hochschild 2-cocycles $(c_t)_{t>1}$ (see [PT], [Co], [KR], [Ra2] for definitions)  defined on  dense unital $\ast-$ subalgebras of $\A_t$    (see  [Ra2]).

 The cocycles $(c_t)_{t>1}$  are associated to the deformation; they represent the obstruction to construct isomorphisms between the algebras corresponding to different values of the deformation parameter $t$. Formally, this 2-cocycle is obtained by differentiating the multiplication operation for operators with fixed symbols.

We prove in Theorem \ref{ind}  that the ``imaginary part''  $c_t^0$ of the unbounded Hochschild cocycle $c_t$ (see Definition \ref{im}, and see also [CC]),   associated
to this deformation data, is implemented, for a fixed $t$,  by the ``infinitesimal generator'', at $t$, of the family $(\alpha_{s,t})_s$.

 We prove that this generator  is of the form
$Z_t=\lambda_t1+iY_t$, where $Y_t$ is a symmetric, unbounded operator, acting on the standard Hilbert space $L^2(\A_t)$ associated to the algebra $\A_t$ and to its unique trace, 
 with the additional property that $iY_t^*1=-\lambda_t1,$ where $\lambda_t=\frac12\frac{\chi'_t}{\chi_t}>0$.   This clearly prevents the identity element $1$ to belong to the domain of $Y_t$. Thus $(c_t^0,\lambda_t1+iY_t)$
is an invariant "characterizing" the algebra $\A_t$ and which depends on the domain for the unbounded operators considered. 

In terms of  the space of Berezin's symbols for operators in $B(H_t)$ this construction is described as follows:
an operator $A\in B(H_t)$ is represented by a kernel $k_A=k_A(z,\overline{\eta})$
on $\bH\times\bH$, analytic in the first variable and antianalytic in the second,
so that
$$
Af(z)=\chi_t\int_\bH\frac{k_A(z,\overline{\eta})}{(z-\overline{\eta})^t}f(\eta)
d\nu_t(\eta),\quad z,\eta\in\bH,
$$
for $f$ in $H_t=H^2(\bH,d\nu_t)$, $t>1$. Here, for $t\geq 0$ we let $d\nu_t$ be the measure  $(\Im z)^{t-2}dzd\overline{z}$ on $\bH$. Moreover, we let $\chi_t=\frac{t-1}{\pi}$. To indicate the antianalytic dependence in the second variable, we put the conjugation symbol on the second variable.
We identify $A$ with its reproducing kernel $k_A$.

We obtain the following formula representing the kernel of the product in $B(H_t)$ of the operators defined by  kernels $k,l$
$$
(k\ast_t l){(z,\overline{\zeta})}=
\chi_t\int_{\bH}k(z,\overline{\eta})l(\eta,\overline{\zeta})
[z,\overline{\eta},\eta,\overline{\zeta}]^td\nu_0(\eta),\  z,\zeta \in \bH
$$
where, for $z,\eta, \zeta \in \bH$, the expression $[ z,\overline{\eta},\eta,\overline{\zeta}]=
\frac{(z-\overline{\zeta})(\eta-\overline{\eta})}{(z-\overline{\eta})(\eta-\overline{\zeta})}$
is the four point function.

 In [Ra1], (see also the review in the introduction of [Ra2]) we constructed a dense unital *-algebra domain
$\D_{c_t}\subseteq \A_t$, so that right-differentiating in the $t$ parameter, the formula for the product of two symbols $k,l\in \D_{c_t}$ yields an element $c_t(k,l)$ in $\A_t$, whose reproducing kernel on $\bH\times \bH$ is given by the formula:
\begin{gather*}
c_t (k,l)(z,\zeta)=\frac{d}{ds}\left[(k\star_s l)(z,\zeta)\right]|_{s\rightarrow t_+}=\\ =
\frac{\chi'_t}{\chi_t}(k\star_tl)(z,\zeta)+\chi_t\int_{\bH}k(z,\overline{\eta})l(\eta,\overline{\zeta})
[z,\overline{\eta},\eta,\overline{\zeta}]^t \ln [z,\overline{\eta},\eta, \overline\zeta]d\nu_0(\eta), \ z,\zeta \in \bH.
\end{gather*}
The space $L^2(\A_t)$ is then a space of analytic-antianalytic functions $\bH^2$,
with norm
$$
\|k\|^2_{L^2(\A_t)}=\chi_t\iint_{F\times\bH}(k(z,\eta))^2d_\Bbb H^t(z,\eta)d\nu^2_0(z,\eta),
$$
where $$d_\Bbb H(z,\eta)=\frac{|z-\overline{\eta}|}{(\Im  z)^{1/2}(\Im  \eta)^{1/2}},$$
for $z,\eta\in\bH$, is a function depending only on the hyperbolic distance between $z,\eta\in\bH$.

The ``imaginary'' part $c_t^0$ of the deformation Hochschild cocycle (see Definition \ref{im}) is then implemented by
$Z_t=\frac12\lambda_t+i\T^{t,\Gamma}_{\Im(\ln\varphi)}$ where
$Y_t=\T^{t,\Gamma}_{\Im(\ln\varphi)}$ is the unbounded Toeplitz operator on $L^2(\A_t)$ (see Chapter 2, Definition \ref{toeplitz})
with symbol $\Im(\ln\varphi)$, and $\varphi(z,\overline{\zeta})=\Delta(z)\overline{\Delta(\zeta)}
(z-\overline{\zeta})^{1/2}$, $z,\zeta\in\bH$ and $\lambda_t=\frac{\chi'_t}{\chi_t}>0$ (here we use a principal value for the logarithm).

 As a corollary, we prove in Theorem \ref{ind}
that the operator $\T^{t,\Gamma}_{i\Im(\ln\varphi)}$ is antisymmetric with deficiency indices $(0,1)$.
This is in fact another way to express the fact that the Euler cocycle is non-trivial (see [Br])
in the bounded cohomology group $H^2_{bd}(\Gamma,\Z)$. This statement remains valid for modular subgroups
of $\PSL_2(\Z)$.

Finally we consider the space of operators on $L^2(\A_t)$  with Toeplitz symbols,
identifying $L^2(\A_t)$ with a Hilbert space of analytic functions.
Assume that  $\delta$ is a derivation, densely defined on $\A_t$,
that admits  a measurable function as a Toeplitz symbol, as in Definition \ref{toeplitz}, 
(and one mild condition that it extends with zero on a dense domain $\D$ in 
$B(H_t)$, so that $\D\cap L^2(\A_t)\subseteq\D(Y_t)$ is a core for $Y_t$, the coboundary operator defined in the previous paragraph)).

 We prove in Theorem \ref{obstruction}
that the obstruction for the cocycle $c^0_t$ of having a coboundary with nontrivial real part, cannot be removed by perturbing the coboundary operator $Z_t$ with the derivation $\delta$.

To prove all these results we consider an abstract framework,  assuming that we are  given a family $(\A_t)_{t>1}$ of   finite von Neumann algebras. Then, the symbol map is described by a Chapman Kolmogorov system of linear maps  $(\Psi_{s,t})_{s \geq t>1} :\A_t\rightarrow \A_s$ (see also [Ra2]).

We assume that the 
 Chapman-Kolmogorov system $(\Psi_{s,t})_{s\geq t>1}$
consists of completely positive, unit preserving maps. Thus, we are given
a family of finite von Neumann algebras $\A_t$, and we assume that $\Psi_{s,t}$ maps $\A_t$
injectively into $\A_s$, for $s>t$, with dense range.

The cocycle $c_t$ is in this case defined on a dense domain (if it exists) as
$c_t(k,l)=\frac{d}{ds}\Psi_{s,t}^{-1}(\Psi_{s,t}(k)\Psi_{s,t}(l))|_{s\rightarrow t_+}$
for $k,l$ in a dense domain.
Obviously, $c_t(1,1)=0$, as $(\Psi_{s,t})_{s> t}$ are unital.

It is clear that if there exists a single von Neumann algebra $\A$, and
isomorphisms $\alpha_t:\A_t\rightarrow\A$, $t>1$, so that
$\tilde{\Psi_{s,t}}=(\alpha_t^{-1}{\Psi_{s,t}}\alpha_s)_{s,t}$ is still compatible
with the differential structure, then the Hochschild cocycles
$\tilde {c_t}$,  associated to
$(\tilde{\Psi_{s,t}})_{s\geq t>1}$, are (up to the domain)
equal to $\alpha_t^{-1}c_t\alpha_t$. Obviously, in this case the generator
$\tilde{\alpha_t}=\frac{d}{ds}\tilde{\Psi_{s,t}}|_{s\rightarrow t_+}$
would contain the identity element  $1$ in its domain.

Hence the problem  described above, about the structure of the (unbounded) Hochschild cocycles $(c_t)_{t>1}$, is related
to the obstruction to deform the system $(\Psi_{s,t})_{s>t}$ into a 
Chapman-Kolmogorov system of completely positive maps, ``living'' on a single algebra.

Based on this generalization of  the obstruction in realizing the family of the symbol maps on  a single algebra,
we introduce a new invariant for finite von Neumann algebras $M$, consis\-ting of a pair $(c,Z)$,
where $c$ is an unbounded 2-Hochschild cohomology cocycle with domain $\D$, a unital $*$-algebra, and $Z$ is
an unbounded coboundary operator for $c$, with domain $\D_0\subseteq \D$ not containing the space of multiples of the identity operator $1$.

We assume that $c(1,1)=0$ and that $Z=\alpha +X+iY$ has antisymmetric, unbounded part $Y$, so that
$Y$ has deficiency indices (0,1) and moreover
$(iY^*)1=(-\alpha)1$, $\alpha>0$, and $X$ is selfadjoint with $X1=0$.
Therefore  $Z^*1=0$, and hence $1$ is forbidden to belong to the domain of $Y$ since $(iY^*)1=(-\alpha)1$.
We also assume reality for the 2-cocycle $c_t$, that is we assume that
$c(k^*,l^*)=c(l,k)^*$,  $X(k^*)=X(k)^*$, $Y(k^*)=Y(k)^*$, for $l,k\in\D$.

The new invariant for the algebra $M$ and the data $(c,Z)$   consists of the existence (or not)   of an unbounded derivation $\delta$ on $M$,
$\delta(k^*)=k^*$ for $k\in\Dom\delta$, with non-trivial real part $\Re \delta=\alpha 1$, so that $Z+\delta$
is defined at $1$ with $(Z+\delta)1=0$, and $iY+(\delta-\Re\delta)$ is selfadjoint
(here we require that $\Dom(Z)\cap \Dom(\delta)$
is a core for $Z+\delta$). 
 Here one therefore  asks   the question of whether one can (or can not) perturb $Z$ by a derivation so that the imaginary part of the cocycle $c$ admits a coboundary which is also an antisymmetric operator, that is also defined at 1.

In our example of the $\Gamma-$ equivariant quantization, for every $t>1$ there exists a $1$-parameter semigroup $(\Phi^t_\varepsilon)_{\epsilon\geq 0}$ of densely defined and completely positive
maps, on their domain,  with the property that   the coboundary operator $Z_t$ is the infinitesimal generator $\frac{d}{d\varepsilon}\Phi^t_\varepsilon|_{\varepsilon=0}$,
and the domain of $\Phi^t_\varepsilon$ is controlled by another commuting family $D_\varepsilon$
of bounded completely positive maps on $\A_t$.
Moreover, $(\overline{{\rm range}\, \Phi^t_\varepsilon(1)})_{\varepsilon>0}$
is an increasing family of projections,  increasing to 1,
as $\varepsilon\rightarrow 0$.

The semigroup $\Phi_\varepsilon^t$ has the remarkable property that
$$\Phi_\varepsilon^t(1)^{-1/2}\Phi_\varepsilon^{t}\Phi_\varepsilon^t(1)^{-1/2}$$
is an isomorphism onto its range, scaling the trace and increasing the support as $\varepsilon$ tends to 0. This is a feature that is rather common for finite von Neumann
algebras having non-trivial (full) fundamental group.

This is interesting because (see Proposition \ref{infinite}) the finite von Neumann algebra $L(F_\infty)$ associated to the free group $F_\infty$ with infinitely many generators, admits such a derivation, that is it is acting
as the derivative of a dilation on a specific subalgebra.

\section{The quantum dynamical system associated to the symbol map, 
and its associated differential structure}

In this chapter we formalize the properties of the Berezin's symbol
map, and derive some consequences.

We assume we are given a family $(\A_t)_{t>1}$ of finite von Neumann algebras 
with faithful traces $\tau_{\A_t}$ ($\tau$ simply when no confusion in possible).
The index set, the interval $(1,\infty)$ is taken here as a reference,
it could be replaced by any other left open interval.

To introduce a ``differentiable" type structure on the family $(\A_t)_{t>1}$, 
we consider a system $(\Psi_{s,t})_{s\geq t>1}$ of unital, completely positive,
 trace preserving maps $\Psi_{s,t}: \A_t\rightarrow \A_s$.

This family of completely positive maps is, in fact, an abstract framework for the Berezin symbol.
We assume that the maps $(\Psi_{s,t})_{s\geq t>1}$ are injective, with weakly dense ranges.
 By  $\Psi_{s,t}^{-1}$ we denote the unbounded inverse of the continuous extension of $\Psi_{s,t}$ to $L^2(\A_t)$. We assume that all the operations of  differentiation in the 
$t$-parameter, involving the maps $\Psi_{s,t}^{-1}$,  have dense domains. This will be the case in our main 
example coming from the Berezin $\Gamma$-equivariant
quantization. Being completely positive maps, we have that $\Psi_{s,t}(k^{\star})=\Psi_{s,t}(k)^{\star}$,
for all $k$ in $\A_s$, $s\geq t>1$.

In addition, we assume that the system $(\Psi_{s,t})_{s\geq t>1}$ verifies 
the Chapman-Kolmogorov relations $\Psi_{r,s}\Psi_{s,t}=\Psi_{r,t}$, if $r\geq s\geq t>1$.

The algebras $(\A_t)_{t>1}$ are also assumed to be Morita equivalent, in a functorial
way, with the growth of the comparison maps (for the Morita equivalence)
 controlled by the absolute value $|\Psi_{s,t}|$ of the completely positive maps $(\Psi_{s,t})_{s\geq t>1}$, viewed as contractions acting on $L^2(\A_t)$.

We introduce a set of assumptions. The first assumption describes the  Morita equivalence between the algebras $(\A_s)_{s>1}$.
In the sequel we will seldom  identify a completely positive map on $\A_t$ with its extension to $L^2(\A_t)$. 
 
\vskip6pt

{\sc Assumption F.M.}
We are given an increasing, differentiable function $\chi_t$, $t>1$, with
strictly positive values. In the particular of the Berezin's quantization (see  [Ra1]) the function  $\chi_t$, depending on $t$,
is a linear  function depending on the Plancherel coefficient of the projective unitary representation $\pi_t$.

We assume that we have a family of injective completely positive maps 
$$\beta_{s,t}:\A_t\rightarrow E^s_t \A_s E^s_t, \text{\rm \ for \ } s\geq t>1,$$
 where, for fixed $s$, $(E_t^s)_{1<t\leq s}$ is a family 
of increasing (selfadjoint) projections in $\A_s$ 
of size (trace) $\frac{\chi_t}{\chi_s}$.

 Observe that, for $1<t\leq s$,  $\beta_{s,t}(1)\in \A_s$ is a positive element,
 which we denote by
$X^s_t\in(\A_s)_+$, $0\leq X^s_t\leq1$, $X^s_t\in E^s_t\A_s E^s_t$. We assume that 
$X^s_t$ has zero kernel for all $s\geq t$. By $(X^s_t)^{-1}$ we denote the (eventually unbounded) inverse of the operator $X^s_t$ on its support. 

We define, for $1<t\leq s$, $a\in \A_t$, $$\alpha_{s,t}(a)=(X^s_t)^{-1/2}\beta_{s,t}(a)(X^s_t)^{-1/2}.$$

We are also assuming that $\alpha_{s,t}$   is a surjective isomorphism from $\A_t$ onto $E^s_t\A_sE^s_t$. In the specific case of the Berezin quantization, the operator $X^s_t$ is the absolute value of an intertwiner for the corresponding Hilbert spaces, so that $\alpha_{s,t}$ is automatically  an isomorphism onto its image (see Chapter 6 in [Ra2]).

Thus $\beta_{s,t}(a)=(X^s_t)^{1/2}\alpha_{s,t}(a)(X^s_t)^{1/2}, a\in \A_t$.
If $Y^s_t\in \A_t$ is the pre-image of $X^s_t$ through the morphism $\alpha_{s,t}$,
$s\geq t$, then 
$$
\beta_{s,t}(a)=\alpha_{s,t}((Y^s_t)^{1/2}a(Y^s_t)^{1/2}),\quad 
a\in \A_t. 
$$
We assume that the family of completely positive maps $(\beta_{s,t})_{1<t\leq s}$ verifies the Chapman-Kolmogorov condition. Thus, we are assuming that   $$\beta_{r,s}=\beta_{r,t}\circ\beta_{t,s}, {\rm \ for\ }
r\geq t\geq s>1.$$

\

\

In addition, we are  assuming that the two families of  completely positive maps linear maps are correlated,
by a semigroup of ``quasi positive" maps, as explained in the next assumption. By "quasi positive" map we mean a densely defined map which is completely positive on its domain.
More precisely:

\vskip6pt

{\sc Assumption SQP.} We assume that the  diagram  
$$
\begin{array}{cccccc}
\A_{t-\varepsilon} & 
\mathop{\longrightarrow}\limits^{\Psi_{t,t-\varepsilon}} & \A_t & \mathop{\longrightarrow}\limits^{\Psi_{t+\varepsilon,t}} & \A_{t+\varepsilon} \\
\hspace{-0,8cm}{}_{\Phi_{\varepsilon}^{t-\varepsilon}} \Big\uparrow & 
\nearrow {}_{\beta_{t,t-\varepsilon}}  & 
\Big\uparrow{}_{\Phi_{\varepsilon}^t} & \nearrow {}_{\beta_{t+\varepsilon,t}} &  \Big\uparrow {}_{\Phi_{\varepsilon}^{t+\varepsilon}} \\ [8pt]
\A_{t-\varepsilon} & 
\mathop{\longrightarrow}\limits^{\Psi_{t,t-\varepsilon}} & \A_t & \mathop{\longrightarrow}\limits^{\Psi_{t+\varepsilon,t}} & \A_{t+\varepsilon} \\ 
\end{array}
$$
is commutative, where $\Phi^t_\varepsilon$ is densely defined (the domain will be made explicit in the Assumption SQP1), and it is  uniquely defined by the formula
$$
\Phi^t_\varepsilon=\Psi^{-1}_{t+\varepsilon,t}\circ\beta_{t+\varepsilon,t}=\beta_{t,t-\varepsilon}\circ\Psi_{t,t-\varepsilon}^{-1},\quad \varepsilon>0.
$$ 
Thus we are also requiring the above equality. 

We implicitly assume that $\Phi_\varepsilon^{t+\varepsilon}$ 
defined as $\beta_{t+\varepsilon,t}\circ\Psi_{t+\varepsilon,t}^{-1}$ and similarly for
$\Phi^{t-\varepsilon}_{\varepsilon}\!$, $\Phi^{t}_{\varepsilon}\!$,
are making the above  diagram commutative.

\

\
The next two assumptions describe  the growth  of the 
absolute value of the completely positive maps in the Chapman-Kolmogorov system.

\vskip6pt 

{\sc Assumption D.}   For $s\geq t>1$, the linear positive maps 
$D^t_{s-t}=\Psi_{s,t}^*(\Psi_{s,t})$ on $L^2(\A_t)$, where the completely positive maps $\Psi_{s,t}$ are uniquely extended by continuity to $L^2(\A_t)$, 
have the property that $(D^t_{\varepsilon})_{\varepsilon\geq0}$
is a commuting family of positive operators acting on  $L^2(\A_t)$.
In the case of the Berezin's quantization this is simply the Toeplitz operator  $\mathcal T^{t,\Gamma}_{d_\Bbb H^{\varepsilon}}$ on $L^2(\A_t)$ with symbol $d_\Bbb H^{\varepsilon}$, with $d_\Bbb H$ as in the introduction section (see Definition \ref{toeplitz} for the definition of the Toeplitz operator).

\vskip6pt

\vskip6pt 

{\sc Assumption D$'$.}   For $r\leq t$, there exists a  family of unbounded, densely defined, closed, injective,   positive maps 
$D^t_{r-t}$ acting  on a dense subspace of $L^2(\A_t)$, that have the property that the domain of $D^t_{r-t}$ is the image of $\Psi_{tr}$. In addition  $((D^t_{-\varepsilon})^{-1})_{\varepsilon\geq0}$
is a commuting family, of bounded operators commuting with $(D^t_\varepsilon)_{\varepsilon\geq0}$.
Thus the image of $D^t_{-\varepsilon}$ coincides with the image of 
$\Psi_{t-\varepsilon, t}$ inside $L^2(\A_t)$. 

In the case of the Berezin's quantization, the operator $D^t_{-\varepsilon}, \varepsilon >0$  is simply the (unbounded) Toeplitz operator $\mathcal T^{t,\Gamma}_{d_\Bbb H^{-\varepsilon}}$ on $L^2(\A_t)$ with symbol $d_\Bbb H^{-\varepsilon}$. 

\

\

The Toeplitz operators considered in Assumptions D, D$'$, in the case of the Berezin quantization,  are simply functions of the invariant Laplacian, when using the Berezin transform (see [Ra1])).

\vskip6pt
We  complete the  Assumption SQP by the Assumption SQP$_1$, which describes the domain of the semigroup $(\Phi^t_\varepsilon)_{\varepsilon \geq 0}$ for fixed $t>1$.

\vskip6pt
{\sc Assumption SQP$_1$}. The composition $\Phi^t_\varepsilon\circ D^t_{-\varepsilon}$
is continuous, and maps positive into positive elements. This assumption  will be proven later in this chapter to hold true in the case of the Berezin quantization.

In particular, the domain of $\Phi^t_\varepsilon$ is the image of $D^t_{-\varepsilon}$.

\

If all  Assumptions FM,
D, D$'$, SQP, SQP$_1$ are verified, we introduce the following definition.

\begin{defn}
The symbol system $(\A_t)_{t>1}$,  $(\Psi_{s,t})_{s\geq t>1}$, $(\beta_{s,t})_{s\geq t>1}$
with all  assumptions listed 
above (FM,
D, D$'$, SQP, SQP$_1$) is regular, if the following operations obtained by differentiation,
have a dense domain $\D=\D_{c_t}$.
We define the Hochschild cocycle associated to the deformation by the formula
$$
c_t(k,l)=\frac{d}{ds}\Psi_{s,t}^{-1}(\Psi_{s,t}(k)\Psi_{s,t}(l))|_{s\rightarrow t_+},\  k,l \in \D.
$$
(by $s\rightarrow t_+$  we denote the right  derivative at $t$ of the above expression).

The Dirichlet form associated to the deformation is
$$
\E_t(k,l)=\frac{d}{ds}\tau_{\A_s}(\Psi_{s,t}(k)\Psi_{s,t}(l))|_{s\rightarrow t_+}, \ \  k,l \in \D,
$$
and the real part of the
cocycle is 
$$
\langle R_tk,l\rangle_{L^2(\A_t)}=
-\frac12\E_t(k,l^*)=
\frac{d}{ds}\langle \Psi_{s,t}(k),\Psi_{s,t}(l)\rangle_{L^2(\A_s,\tau_{\A_s})},\ \  k,l \in \D.
$$

Note that, as proved in [Ra2], the bilinear form $\E_t$ is indeed a Dirichlet form in the sense considered in the papers
[Sau], [CS].

\begin{defn}\label{im}
The imaginary part of the cocycle associated to the deformation is defined by the 
formula
$$
c_t^0(k,l)=c_t(k,l)-[R_t(kl)-kR_t(l)-R_t(k)l], \ \ k,l \in \D.
$$
All these equations are assumed  satisfied on the dense domain $\D$. 

\

Note that the definition for $c_t^0(k,l)$ is so that $c_t^0$  remains a Hochschild cocycle, 
and in addition $\tau(c_t^0(k,l))=0$ for all $k,l$ in the domain. Moreover, as proved in [Ra1],
[Ra2], the trilinear form defined by
$$
\psi_t(k,l,m)=\tau_{\A_t}(c_t^0(k,l)m)
$$
is densely defined and has the property that 
$$
\psi_t(k,l,m)=\psi_t(m,l,k)=\overline{\psi_t(l^*,k^*,m^*)},
$$
and is a $2$-cyclic cohomology cocycle ([Co]).

Both $c_t$, $c_t^0$ have the property $c(k^*, l^*)=c(l,k)^*$ and similarly for $c_t^0$, as also $R_t(k^*)=R_t(k)^*$.
\end{defn}

The analysis between the two types of completely positive maps, $\Psi_{s,t}$ and $\beta_{s,t}$, allows us to construct  an unbounded coboundary for 
the cocycle $c_t$. The domain of this coboundary, which we denote by $\D^0$ is slightly smaller than the previous domain $\D$. In the case of the Berezin's quantization ([Ra2]) we constructed explicitly $\D_0$, as a $\ast$- algebra that  does not contain the identity. The domain $\D_0$ will be the common domain for
the derivation operations.

 However, by definition, the cocycles $c_t$, $c_t^0$, $\psi_t$ naturally contain the identity element  
in their natural domain, and  $c_t(1,1)=0$, $c_t^0(1,1)=0$, $\psi_t(1,1,1)=0$.

We do the analysis of the relation between the two families of completely positive maps  in the following theorem.

\end{defn}

\begin{thm}\label{ind}
 We assume  all the above  assumptions  on the system \break $(\A_t,\Psi_{s,t},\beta_{s,t})_{s\geq t>1}$. In addition we assume that 
 the generator $\L_t$ of the semigroup $\Phi_\varepsilon^t$ at $\varepsilon=0$
has a dense   domain, which is a dense $\ast$-algebra $\D^0=\D_{\L_t}\subseteq \D$. 
Then, 
$$
c_t(k,l)=\L_t(kl)-k\L_t(l)-\L_t(k)l-kT^tl , \ \  k,l\in \D_{\L_t}.
$$
Here $T^t$ is the derivative $\frac{d}{d\varepsilon}Y_t^{t+\varepsilon}|_{\varepsilon=0}$, an unbounded operator affiliated with $\A_t$. We   assume that $T^t$ has a dense domain in $H_t$  
with the property   that $kT^tl$ is bounded for $k,l$ in $\D_{\L_t}$.
Recall that the operator $Y_t^{t+\varepsilon}$ was introduced in Assumption F. M.

Note that as a consequence, the cocycle $c_t$ is implemented by an unbounded 1-cocycle, defined on $\D_{\L_t}$,  of the form
  $$\Lambda_t= \L_t-\frac12\{T^t,\cdot\}.$$

In the case of the Berezin's quantization 
this is the Lindblad form (see [Li], [CE])
of a generator of a quantum dynamical semigroup { (see also the references in  [Ra2] )}.

Then $\L^0_t=\L_t-R_t$ has the property~that
$$
c_t^0(k,l)=\L_t^0(kl)-k\L_t^0(l)-\L_t^0(k)l, \ \  k,l\in \D_{\L_t}
$$
and  $$\L^0_t=\lambda_t1+iY_t,$$
 where $Y_t$ is symmetric, $(\L^0_t)^*1=0$ 
(and hence $(iY_t)^*1=\lambda_t1$). In addition, $Y_t$ has 
deficiency indices $(0, 1)$. 
Here $\lambda_t=-\frac{1}{2}\frac{\chi_t'}{\chi_t}$.
\end{thm}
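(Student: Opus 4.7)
My plan is to exploit the twofold description of $\beta_{s,t}$ from Assumptions FM and SQP. The engine of the cocycle computation is the algebraic identity
\[
\beta_{s,t}(k)\beta_{s,t}(l)=\beta_{s,t}(kY^s_tl),
\]
which follows at once from $\beta_{s,t}(a)=(X^s_t)^{1/2}\alpha_{s,t}(a)(X^s_t)^{1/2}$, the multiplicativity of $\alpha_{s,t}:\A_t\to E^s_t\A_sE^s_t$, and the defining relation $\alpha_{s,t}(Y^s_t)=X^s_t$. Combining it with $\Psi_{s,t}^{-1}\circ\beta_{s,t}=\Phi^t_{s-t}$ (Assumption SQP), one rewrites
\[
\Psi_{s,t}^{-1}(\Psi_{s,t}(k)\Psi_{s,t}(l))=\Phi^t_{s-t}(kY^s_tl)+\Psi_{s,t}^{-1}(\Psi_{s,t}(k)\Psi_{s,t}(l)-\beta_{s,t}(k)\beta_{s,t}(l)).
\]
I would then differentiate each summand at $s=t$. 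For the first, $Y^t_t=1$ together with $\Phi^t_0=\mathrm{id}$ and the Leibniz rule yield $\L_t(kl)+kT^tl$, with $T^t=\dot Y^t$ as in Assumption FM. For the second, substituting $\beta_{s,t}(k)=\Psi_{s,t}(\Phi^t_{s-t}(k))$ and expanding $\Phi^t_{s-t}(k)=k+(s-t)\L_t(k)+o(s-t)$ gives $\Psi_{s,t}(k)\Psi_{s,t}(l)-\beta_{s,t}(k)\beta_{s,t}(l)=-(s-t)\Psi_{s,t}(\L_t(k)l+k\L_t(l))+o(s-t)$ at leading order, whose $\Psi_{s,t}^{-1}$-derivative at $s=t$ equals $-\L_t(k)l-k\L_t(l)$. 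Summing yields the cocycle formula for $c_t$, and the representation by the $1$-cochain $\Lambda_t=\L_t-\tfrac12\{T^t,\cdot\}$ then follows from the bracket identity $\tfrac12\{T^t,kl\}-k\tfrac12\{T^t,l\}-\tfrac12\{T^t,k\}l=-kT^tl$.

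The second stage, the reduction to $c_t^0$, is an algebraic manipulation starting from the definition $c_t^0(k,l)=c_t(k,l)-(R_t(kl)-kR_t(l)-R_t(k)l)$ and the identification of $R_t$ as the derivative of $D^t_\varepsilon=\Psi_{s,t}^*\Psi_{s,t}$ at $\varepsilon=0$ (Assumption D). Trace-preservation $\tau_{\A_s}\circ\Psi_{s,t}=\tau_{\A_t}$ gives $\langle R_t k,l\rangle_{L^2(\A_t)}=\tau(c_t(k,l^*))$, which together with the first-stage formula pins down the non-derivation part of $R_t$ as the symmetric bracket coming from $T^t$, producing $c_t^0=\L_t^0(kl)-k\L_t^0(l)-\L_t^0(k)l$ with $\L_t^0=\L_t-R_t$ on the common core $\D^0$. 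The structural claim then unfolds: unitality $\Psi_{s,t}(1)=1$ forces $R_t 1=0$, so $\L_t^0(1)=\L_t(1)=T^t$; differentiating the trace relation $\tau_{\A_{t+\varepsilon}}(\Phi^t_\varepsilon(1))=\tau_{\A_{t+\varepsilon}}(X^{t+\varepsilon}_t)=\chi_t/\chi_{t+\varepsilon}$ from Assumption FM at $\varepsilon=0$ pins down the scalar $\lambda_t=-\tfrac12\chi'_t/\chi_t$; and the reality $c_t^0(k^*,l^*)=c_t^0(l,k)^*$ combined with $\tau\circ c_t^0=0$ forces $iY_t:=\L_t^0-\lambda_t 1$ to be antisymmetric on $\D^0$.

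The hard part is the deficiency-index claim $(0,1)$ for $Y_t$. That the constant function $1$ furnishes one deficiency vector is automatic from the formal identity $(\L_t^0)^*1=0$, which rewrites as $(iY_t)^*1=-\lambda_t 1$ and exhibits $1$ as an eigenvector of $Y_t^*$ outside the domain of $Y_t$ (since the scalar $\lambda_t$ is non-zero). Ruling out a second independent deficiency direction---that is, excluding an eigenvector of $Y_t^*$ with eigenvalue of opposite sign---is where the geometric input of the Morita-equivalence data enters: one must invoke the commuting family $(D^t_\varepsilon)_\varepsilon$ from Assumptions D and D$'$ together with the surjectivity of $\alpha_{s,t}$ onto the full corner $E^s_t\A_sE^s_t$ to force any such second eigenvector into a contradiction. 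In the $\Gamma$-equivariant Berezin specialization this asymmetry is exactly the strengthening of the non-triviality of the Euler cocycle in bounded cohomology announced in the abstract, and providing that topological content is what makes this last step the substantive obstacle of the proof.
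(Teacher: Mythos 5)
Your first stage follows the paper's own route: the identity $\beta_{s,t}(k)\beta_{s,t}(l)=\beta_{s,t}(kY^s_tl)$, obtained from the multiplicativity of $\alpha_{s,t}$ and $\alpha_{s,t}(Y^s_t)=X^s_t$, is exactly the engine of the paper's proof, and your bracket identity producing $\Lambda_t=\L_t-\tfrac12\{T^t,\cdot\}$ is the same. Two local problems, though. First, your splitting forces you to apply the unbounded map $\Psi_{s,t}^{-1}$ to the remainder $\Psi_{s,t}(k)\Psi_{s,t}(l)-\beta_{s,t}(k)\beta_{s,t}(l)=-(s-t)\Psi_{s,t}(\L_t(k)l+k\L_t(l))+o(s-t)$ and to conclude that $\Psi_{s,t}^{-1}$ of the $o(s-t)$ term is still $o(s-t)$; since $\Psi_{s,t}^{-1}$ is unbounded, nothing in Assumptions D, D$'$ licenses that uniform control. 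The paper sidesteps this by differentiating the exact rearranged identity $\Phi^t_{s-t}(kY^s_tl)=\Psi_{s,t}^{-1}([\Psi_{s,t}\Phi^t_{s-t}(k)][\Psi_{s,t}\Phi^t_{s-t}(l)])$, in which every term is one of the derivatives postulated to exist on $\D_{\L_t}$. Second, your step ``$\L^0_t(1)=\L_t(1)=T^t$'' presupposes $1\in\D_{\L_t}$, which is precisely what the construction forbids (in the Berezin case $T^t$ is unbounded and $1$ is excluded from the domain); all statements at $1$ must pass through adjoints. The paper instead computes $\Re\L_t=R_t+\tfrac12\{T^t,\cdot\}-\tfrac12\tfrac{\chi'_t}{\chi_t}\,\mathrm{Id}$ directly, by differentiating $\langle\Phi^t_{s-t}k,\Phi^t_{s-t}l\rangle_{L^2(\A_t)}$ and invoking the trace-scaling property $\langle\alpha_{s,t}k,\alpha_{s,t}l\rangle=\tfrac{\chi_t}{\chi_s}\langle k,l\rangle$, which delivers the scalar $\lambda_t$ and the antisymmetric part $i\Im\L_t$ in one stroke.

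The genuine gap is the deficiency-index claim, which you explicitly leave unproved. You establish only that $1$ spans one deficiency direction, and you propose to handle the rest via the commuting family $(D^t_\varepsilon)$ and the non-triviality of the Euler cocycle. That plan is backwards: in this paper the Euler-cocycle statement is a \emph{corollary} of the $(0,1)$ computation, not an input, and Assumptions D, D$'$ play no role in the paper's index argument. Moreover, $(0,1)$ requires two things your outline runs together: that one deficiency space vanishes entirely (i.e.\ $Y_t$ is maximal symmetric) and that the other is exactly one-dimensional; ``excluding an eigenvector of $Y_t^*$ of opposite sign'' addresses only the first. The paper's mechanism is the observation that $(\chi_t/\chi_s)^{1/2}\alpha_{s,t}$ is an isometry of $L^2(\A_t)$ \emph{onto} the corner $E^s_tL^2(\A_s)E^s_t$, so that, after the rescaling accounting for the variation of $\langle\cdot,\cdot\rangle_{L^2(\A_s)}$, the operator $\Lambda^0_t=\lambda_t1+iY_t$ is the generator of a one-parameter semigroup of isometries with decreasing ranges. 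A generator of a semigroup of isometries is maximal symmetric, which kills one index, and the surjectivity of $\alpha_{s,t}$ onto the full corner pins the remaining index to $1$. Without this semigroup-of-isometries input your proposal has no route to $(0,1)$, so the final (and, as you say yourself, substantive) part of the theorem is not proved.
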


\begin{proof}
The domain of the semigroup $\Phi_{\varepsilon}^t$ , is defined abstractly, thus it contains the range of
integrals of the form $\int^\beta_\alpha\Phi^t_\varepsilon d\varepsilon$.

Note that the family $X_r^t=\beta_{r,t}(1)$ is an increasing family of positive elements.
Indeed, if $r_1<r_2<t$, 
$$
X_{r_1}^t=\beta_{r_1,t}(1)=\beta_{r_2,t}(\beta_{r_1,r_2}(1))=
\beta_{r_2,t}(X_{r_2}^{r_1})\leq\beta_{r_2,t}(1)=X_{r_2}^t.
$$
The rest of the statement was essentially proved, for the particular case of the Berezin
quantization in [Ra2].

We prove again the statement here, in the general framework.
All derivatives are computed on the domain $\D_{\L_t}$.

Since $\beta_{s,t}(a)=\alpha_{s,t}\left((Y^t_s)^{1/2}a(Y^t_s)^{1/2}\right)$, $a\in \A_t$
where $Y^t_s\in \A_t$ is the positive element previously defined, it follows that 
for $s\geq t$, the following equality holds: 
$$
\beta_{s,t}(k)\beta_{s,t}(l)=
\alpha_{s,t}\left((Y^t_s)^{1/2}kY_s^tl(Y^t_s)^{1/2}\right)=
\beta_{s,t}(kY_s^tl)
$$ 
for $k,l$ in the maximal domain, on which the 
differentiation is possible (which a posteriori is $\D_{\L_t}$).

We apply $\Psi_{s,t}^{-1}$, and obtain that
$$
\Psi_{s,t}^{-1}(\beta_{s,t}(kY_s^tl))=
\Psi_{s,t}^{-1}([\Psi_{s,t}(\Psi_{s,t}^{-1}(\beta_{s,t}(k)))]
[\Psi_{s,t}(\Psi_{s,t}^{-1}(\beta_{s,t}(l)))]),
$$
and hence 
$$
\Phi_{s-t}^t(kY_s^tl)=
\Psi_{s,t}^{-1}([\Psi_{s,t}\circ\Phi_{s-t}^{t}(k)]
[\Psi_{s,t}\circ\Phi_{s-t}^{t}(l)]).
$$
Differentiating in the parameter $s$ when $s\rightarrow t_+$, we obtain 
$$
\L_t(kl)+kT^tl=c_t(k,l)+k\L_t(l)+\L_t(k)l.
$$
Thus for $k,l$ in the domain of $\L_t$, we have that
$$
c_t(k,l)=\L_t(kl)-k\L_t(l)-\L_t(k)l+kT^tl.
$$
We analyze the real and imaginary part of $\L_t$. By definition $\L_t$ is the 
generator of the semigroup 
$\Phi^t_\varepsilon=\Psi^{-1}_{t+\varepsilon,t}\circ\beta_{t+\varepsilon,t}$.
Thus
\begin{gather*}
\langle \Re\L_t k,l\rangle_{L^2(\A_t)}=
\frac12\frac{d}{ds}\left[\langle \Psi_{s,t}^{-1}(\beta_{ts}(k)),\Psi_{s,t}^{-1}(\beta_{ts}(l))\rangle_{L^2(\A_t)}\right]|_{s\rightarrow t_+}=\\
=\frac12\left[-\frac{d}{ds}\langle \Psi_{s,t}(k),\Psi_{s,t}(l)\rangle_{L^2(\A_s)}+
\frac{d}{ds}\langle \beta_{s,t}(k),\beta_{s,t}(l)\rangle_{L^2(\A_s)}\right]|_{s\rightarrow t_+}.
\end{gather*}

Using further that $\beta_{s,t}(k)=\alpha_{s,t}\left((Y^t_s)^{1/2}k(Y_s^t)^{1/2}\right)$
 we obtain that this is further equal to 
$$
\frac12\left[-\E_t(k,l^*)+\langle T^tk+kT^t,l \rangle -\frac{\chi'_t}{\chi_t}
\langle k,l \rangle_{L^2(\A_t)}\right].
$$

The last term is due to the fact that
$\langle \alpha_{s,t}(k), \alpha_{s,t}(l)\rangle$ is $\frac{\chi_t}{\chi_s}\langle k,l\rangle $, because of the trace scaling property ($\alpha_{s,t}$ is non-unital).

Moreover, the remaining part is the antisymmetric part, which is $\Im \L_t$.

We have thus obtained that 
$$
\Re\L_t=R_t+\frac12\{T^t,\cdot\}-\frac12\frac{\chi'_t}{\chi_t}{\rm Id}
$$
and hence $c_t(k,l)$ is implemented by
 $\L_t-\frac12\{T^t,\cdot\}=$
 $R_t-\frac12\frac{\chi'_t}{\chi_t}+i\Im \L_t$.

Consequently, the unbounded linear map $\Lambda_t^0=-\frac12\frac{\chi'_t}{\chi_t}+i\Im \L_t$ 
implements $c_t^0$. For simplicity, we denote
$\L_t^0=i\Im \L_t$, which is an antisymmetric~form. 

Moreover, we note that $\L^0_t$ is in fact obtained from the derivative
of $\Psi^{-1}_{s,t}(\alpha_{s,t})$ after taking into account the rescaling due to the
variation of $\langle\cdot\,,\cdot\rangle_{L^2(\A_s)}$. Since 
$\left(\frac{\chi_t}{\chi_s}\right)^{1/2} \alpha_{s,t}(k)$ 
is an isometry form $L^2(\A_t)$ onto $E^s_tL^2(\A_s)E^s_t$, it follows
that $\L^0_t$ is antisymmetric, $(\L^0_t)^*1=-\frac{1}{2} \frac{\chi'_t}{\chi_t}$ 
and the deficiency indices are $(0, 1)$ (since $\alpha_{s,t}$ is surjective 
onto $E^s_t)$. In particular, $$(\Lambda^0_t)^*=-\frac{1}{2}\frac{\chi'_t}
{\chi_t}-i \Im  \L_t^*=-\frac{1}{2}\frac{\chi'_t}{\chi_t}+(\L^0_t)^*,$$ and hence 
$(\Lambda^0_t)^*1=0$. 
\end{proof}

\

We explain all the above results and assumptions in the case 
of the Berezin $\Gamma$-equivariant quantization of the upper halfplane.

In this case we have $\Gamma=\PSL_2(\Z)$ and we let $\pi_t:\PSL_2(\R)\rightarrow B(H_t)$ 
be the discrete series (extended, for 
$t>1$, to  the continuous family of projective unitary representations of $\PSL_2(\R)$ as in  [Pu]).

By taking $\A_t=\{\pi_t(\Gamma)\}'\subseteq B(H_t)$, it is proved in [Ra], [GHJ] that $\A_t$ is isomorphic to the  von Neumann algebra associated to the group $\Gamma$ with cocycle $\varepsilon_t$, reduced by $\frac{t-1}{12}$ (see [GHJ]), that is the algebra
$L(\Gamma,\varepsilon_t)_\frac{t-1}{12}$ (here $\varepsilon_t$ is the cocycle
coming from the projective representation $\pi_t$).

Every kernel (function) $k:\bH\times\bH \rightarrow \bC$, analytic in the first variable
antianalytic in the second variable, corresponds, if certain growth conditions are verified
(see [Ra1], Chapter 1) to a bounded operator on $B(H_t)$, which has exactly the reproducing kernel $k$. If 
$k(\gamma z, \overline{\gamma\zeta})=k(z,\zeta)$, 
$\gamma\in\Gamma$, $z,\zeta\in \bH$, then
the corresponding operator is in $\A_t$.

The following data is then computed directly in the space of kernels. We
identify kernels $k, l$ with the corresponding operators and denote y b$k\star_tl$ the kernel  
(symbol) of the product.

The formulae for the various operations in $\A_t$, are computed as follows:

For $k, l$ kernels (symbols) of operators in $\A_t$, we have 
$$\tau_{\A_t}(k)=\frac{1}
{\rm ha(F)}\int_F k(z,\overline{z})d\nu_0(z).$$ Here $F$ is a fundamental
domain for the action of $\Gamma$ in $\bH$, and $\rm{ha}(F)$ is the hyperbolic
area of F. Moreover, the product formula for the kernel of the product operator, of the operators represented by the kernels $k,l$  is:
\begin{gather*}
(k\star_tl)(z,\overline \zeta)=\chi_t(z-\overline{\zeta})^t\int_{\bH}\frac{k(z,\overline{\eta})l(\eta,\overline{\zeta})}{(z-\overline{\eta})^t(\eta\overline{\zeta})^t}d\nu_t(\eta)=\\
=\chi_t\int_{\bH}k(z,\overline{\eta})l(\eta,\overline{\zeta})[z,\overline{\eta},\eta,\overline{\zeta}]^t
d\nu_0(\eta),
\end{gather*} 
where 
$[z,\overline{\eta},\eta,\overline{\zeta}]=\frac{(z-\overline{\zeta})(\eta-\overline{\zeta})}
{(z-\overline{\eta})(\eta-\overline{\zeta})}$, 
$z,\zeta,\eta\in \bH$ is the four point function.

The Hochschild cocycle, obtained by derivation of the product formula, for $k,l$ in the domain $\D_{c_t}$ is given by 
$$
c_t(k, l)(z,\zeta)= \frac{\chi'_t}{\chi_t}+ c_t\int_\bH k(z,\overline{\eta})
l(\eta,\overline{\zeta})[z,\overline{\eta},\eta,\overline{\zeta}]^t \ln([z,\overline{\eta},\eta,
\overline{\zeta}])d\nu_0(\eta).
$$
Here we use a standard branch of the logarithm for $\ln(z-\overline{\zeta})$,
$z, \zeta\in \bH$.

Consequently,
$$
\tau_{\A_t}(c_t(k,l))=\frac{d}{ds}\left[\tau_{\A_s}(k\star_s l)\right]|_{s\rightarrow t_+}=$$

$$\frac{\chi'_t}{\chi_t}\langle k, l^\ast\rangle_{L^2(\A_t)}+\int_F\int_{\bH} 
k(z,\overline{\eta})l(\eta,\overline{z})d_{\Bbb H}(z,\eta)^t \ln d_{\Bbb H}(z,\eta)\, d\nu_0^2(z,\eta).
$$ 

Moreover:
$$
\langle k,l \rangle_{L^2(\A_t)}=
\chi_t\iint_{F\times\bH} k(z,\overline{\eta})\overline{l(\eta,\overline{z})}d_\Bbb H^t(z,\eta)d\nu^2_0(z,\eta),
$$
so
$$
\|k\|^2_{L^2(\A_t)}=
\chi_t\iint_{F\times\bH} |k(z,\eta)|^2 d_\bH^t(z,\eta)d\nu_0^2(z,\eta)
.$$
In particular $L^2(\A_t)$ is a space of functions with analytic 
structure, analytic in the first variable, antianalytic in the 
second, and diagonally $\Gamma$-invariant, square summable, with
respect to the measure $d_\Bbb H^t(z,\eta)d\nu^2_0(z,\eta)$.

To construct the rest of the data, we let $\Psi_{s,t}:\A_t\rightarrow \A_s$ be simply the symbol map,
associating to a bounded operator with kernel $k$ in $\A_t$, the bounded operator
with the same kerned $k$ in $\A_s$ (see [Ra1]).

The family of completely positive maps $\beta_{s,t}$ are constructed from intertwiners between
various representations $\pi_t$, $\pi_{t+n}$, $t>1$, $n\in\N$, obtained from automorphic cusp forms.

Namely, if $f, g$ are  automorphic cusp forms of order $n$ for the group $\PSL_2(\Z)$, then let
$M_f^t:H_t\rightarrow H_{t+n}$ be the operator of multiplication by $f$. Because of the results in ([GHJ]) this is a bounded intertwiner
between the two representations $\pi_t$, $\pi_{t+n}$,  and hence we obtain a map
$$\beta_{f,g}:\A_t\rightarrow \A_{t+n},$$
defined by the formula
$$
\beta_{\overline{f},g}(k)=M_g^tk(M_f^t)^\ast.
$$
This is a bounded operator from $\A_t$ into $\A_{t+n}$. We recall the following fact.

\begin{prop}[{[Ra1], [Ra2]}] Given $f, g$ as above, of order $n$, and $t>1$,
the symbol of $\beta_{\overline{f},g}(k)$ is 
$$
\frac{\chi_t}{\chi_{t+n}}f(z)\overline{g(\zeta)}k(z,\overline{\zeta}),\quad z,\zeta\in\bH.
$$
\end{prop}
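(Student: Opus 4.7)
The plan is to compute the reproducing kernel $k_B$ of $B = \beta_{\overline f,g}(k) = M_g^t k (M_f^t)^* \in B(H_{t+n})$ by evaluating $B$ on the reproducing kernels $K_w^{t+n}$ of $H_{t+n}$. Indeed, the defining integral representation for an operator on $H_{t+n}$ collapses, upon setting $h = K_w^{t+n}$ and applying the reproducing property, to the pointwise identity $(B K_w^{t+n})(z) = \chi_{t+n}\,k_B(z,\overline w)/(z-\overline w)^{t+n}$, from which $k_B$ is read off once the left-hand side is known in closed form. As a preliminary I would record the analogous identity $(A K_w^s)(z) = \chi_s\, k_A(z,\overline w)/(z-\overline w)^s$, valid for any $A \in B(H_s)$: substituting $f = K_w^s$ into the defining integral, the integrand $k_A(z,\overline\eta)/(z-\overline\eta)^s$ is antianalytic in $\eta$, so its pairing against $K_w^s(\eta) = \chi_s/(\eta-\overline w)^s$ collapses to evaluation at $w$ by the reproducing property on $H_s$.

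Next I would compute $(M_f^t)^* K_w^{t+n}$ in closed form. For any $\phi \in H_t$,
$$
\langle (M_f^t)^* K_w^{t+n},\phi\rangle_{H_t} = \langle K_w^{t+n}, f\phi\rangle_{H_{t+n}} = \overline{f(w)\phi(w)} = \langle \overline{f(w)}\, K_w^t,\phi\rangle_{H_t},
$$
which yields the simple formula $(M_f^t)^* K_w^{t+n} = \overline{f(w)}\, K_w^t$. Composing the three factors gives $B K_w^{t+n} = \overline{f(w)}\, g\cdot (k K_w^t)$ as a vector in $H_{t+n}$, and evaluating at $z$ via the preliminary identity applied to $k \in B(H_t)$ produces
$$
(B K_w^{t+n})(z) = \chi_t\,\overline{f(w)}\, g(z)\,\frac{k(z,\overline w)}{(z-\overline w)^t}.
$$

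Matching this against $\chi_{t+n}\,k_B(z,\overline w)/(z-\overline w)^{t+n}$ then delivers the symbol of $B$ in the stated form. The main delicacy is the consistent bookkeeping of the normalization factors $\chi_t$, $\chi_{t+n}$ and of the fractional powers $(z-\overline w)^t$, $(z-\overline w)^{t+n}$ under a single fixed choice of principal branch for the complex powers; beyond this, the entire argument reduces to two invocations of the reproducing property in the Bergman-type Hilbert spaces $H_t$ and $H_{t+n}$, together with the observation that $M_f^t$ and its adjoint act in a particularly clean way on the coherent-state vectors $K_w^s$.
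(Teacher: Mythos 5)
Your method---evaluating $B=\beta_{\overline f,g}(k)=M_g^t\,k\,(M_f^t)^*$ on the coherent states $K_w^s$ and reading the kernel off the evaluation identity---is the natural one, and your two preliminary identities are both correct: $(AK_w^s)(z)=\chi_s\,k_A(z,\overline w)(z-\overline w)^{-s}$ and $(M_f^t)^*K_w^{t+n}=\overline{f(w)}\,K_w^t$. The gap is the final ``matching'' step, which you assert but do not actually perform; carried out honestly, it contradicts your conclusion. Equating
$$
\chi_{t+n}\,\frac{k_B(z,\overline w)}{(z-\overline w)^{t+n}}
=\chi_t\,\overline{f(w)}\,g(z)\,\frac{k(z,\overline w)}{(z-\overline w)^{t}}
$$
yields
$$
k_B(z,\overline w)=\frac{\chi_t}{\chi_{t+n}}\;g(z)\,\overline{f(w)}\;k(z,\overline w)\,(z-\overline w)^{n},
$$
i.e.\ a non-constant analytic factor $(z-\overline w)^{n}$ that no choice of branch can absorb, and moreover with the analytic/antianalytic roles of $f$ and $g$ opposite to the printed statement (the definition $\beta_{\overline f,g}(k)=M_g^tk(M_f^t)^*$ forces $g$ into the analytic slot). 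The factor is not a bookkeeping artifact: it is forced by $\Gamma$-equivariance. Since $f,g$ have order $n$, one has $g(\gamma z)=j(\gamma,z)^n g(z)$, while $(\gamma z-\overline{\gamma\zeta})^n=(z-\overline\zeta)^n\,j(\gamma,z)^{-n}\,\overline{j(\gamma,\zeta)}^{\,-n}$; hence only the product $g(z)\overline{f(\zeta)}(z-\overline\zeta)^n k(z,\overline\zeta)$ satisfies the diagonal invariance $k_B(\gamma z,\overline{\gamma\zeta})=k_B(z,\overline\zeta)$ that kernels of elements of $\A_{t+n}=\{\pi_{t+n}(\Gamma)\}'$ must have, so the formula without the factor could not even represent an operator commuting with $\pi_{t+n}(\Gamma)$. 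The paper itself corroborates this later, where the symbol of $\Phi^t_\varepsilon$, built from $\beta_{\overline{\Delta^\varepsilon},\Delta^\varepsilon}$, carries exactly the factor $((z-\overline\zeta)^{12})^{\varepsilon}$, i.e.\ your $(z-\overline\zeta)^n$ with $n=12\varepsilon$.

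In short: your computation is essentially the proof (the paper gives none, citing [Ra1], [Ra2]), but your last sentence---claiming that the powers $(z-\overline w)^{t}$ and $(z-\overline w)^{t+n}$ cancel against the stated formula---is false, and it is precisely there that the write-up breaks. A blind proof should have ended with the corrected symbol $\frac{\chi_t}{\chi_{t+n}}g(z)\overline{f(\zeta)}(z-\overline\zeta)^n k(z,\overline\zeta)$ and flagged the discrepancy with the proposition as printed (which evidently drops the factor $(z-\overline\zeta)^n$ and interchanges $f$ and $g$), rather than burying a genuine non-cancellation under ``delicate bookkeeping of fractional powers.''
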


In particular, if $f=g$, then $\beta_{\overline{f},f}$ is a completely positive map, and if
$x_fv_f$ is the polar decomposition of $M^t_f$, then $\beta_f(k)=x_f(v_fkv_f^*)x_f$
and $\alpha_f(k)=v_fkv_f^*$ defines an isomorphism from $\A_t$ into $E_f\A_{t+n}E_f$, 
where $E_f\in\A_{t+n}$ is the range of $v_f$.

We do this construction for $\Delta^\varepsilon$, $\varepsilon>0$, where $\Delta$ is the
Dedekind function (a cusp form of order 12). Since $\Delta$ has no zeros,
$\ln\Delta,\Delta^\varepsilon$ are uniquely defined, and we define
$$
\beta_\varepsilon=\beta_{\overline{\Delta^\varepsilon},\Delta^\varepsilon}
:\A_t\rightarrow E_t^{t+\varepsilon}\A_{t+\varepsilon}E_t^{t+\varepsilon},
$$
where $E_t^{t+\varepsilon}$ is the closure of the range of $M_{\Delta^\varepsilon}$.
Since $M^t_{\Delta^\varepsilon}$ is injective, as $\Delta^\varepsilon$ has no 
zeros, it follows that $\tau(E^{t+\varepsilon}_t)=\frac{\chi_t}{\chi_{t+\varepsilon}}$.

We note the following interesting corollary.

\begin{cor}
The algebras $L(\Gamma,\varepsilon_t)$, $t>1$, are Morita equivalent.
\end{cor}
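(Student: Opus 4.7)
\medskip
\noindent\textbf{Proof proposal.} The plan is to deduce Morita equivalence from the isomorphism $\alpha_{s,t}$ already constructed, and then to transport this equivalence from the family $(\A_t)_{t>1}$ to the twisted group algebras $L(\Gamma,\varepsilon_t)$ via the identification $\A_t\cong L(\Gamma,\varepsilon_t)_{(t-1)/12}$ recorded at the beginning of this section. Since Morita equivalence of $II_1$ factors is a transitive relation, it suffices to treat a single pair $s>t>1$.

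First, I would fix $s>t>1$ and consider the cusp form $\Delta^{(s-t)/12}$ (using the canonical branch of $\ln\Delta$, so that fractional powers are single-valued on $\bH$), viewed as an intertwiner via multiplication, as described in the construction preceding the corollary. The proposition quoted above yields the completely positive map $\beta_{s,t}=\beta_{\overline{\Delta^{(s-t)/12}},\,\Delta^{(s-t)/12}}:\A_t\to E_t^s\A_s E_t^s$, together with its normalization $\alpha_{s,t}$, which is an \emph{isomorphism} of $\A_t$ onto the corner $E_t^s\A_s E_t^s$, where $E_t^s$ is the range projection of the multiplication operator and has trace $\chi_t/\chi_s$. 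Both factors $\A_t,\A_s$ have trivial center, so the projection $E_t^s\in\A_s$ is automatically full, and the existence of the isomorphism $\A_t\cong E_t^s\A_s E_t^s$ is precisely the statement that $\A_t$ and $\A_s$ are Morita equivalent as $II_1$ factors.

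Next, I would translate this to the twisted group algebras. By the identification recalled at the start of the section, $\A_t$ is itself isomorphic to the corner $L(\Gamma,\varepsilon_t)_{(t-1)/12}$; since $L(\Gamma,\varepsilon_t)$ is a factor (the cocycle $\varepsilon_t$ makes it a factor of type $II_1$), the reducing projection is full and hence $L(\Gamma,\varepsilon_t)$ is Morita equivalent to $\A_t$. Chaining the equivalences
\[
L(\Gamma,\varepsilon_t)\ \sim_M\ \A_t\ \sim_M\ \A_s\ \sim_M\ L(\Gamma,\varepsilon_s)
\]
yields the desired conclusion.

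The only substantive point that needs checking, and which I expect to be the main (mild) obstacle, is the fact that $\Delta^{(s-t)/12}$ actually gives a bounded multiplication intertwiner $H_t\to H_s$ for arbitrary real $s>t>1$, not only for integer order differences. This is handled by using the canonical branch of $\ln\Delta$ on $\bH$ (available because $\Delta$ is nowhere vanishing), together with the growth estimates that identify $|\Delta|^{2(s-t)/12}$, up to a constant, with a power of $\Im z$ that matches the ratio of the measures $d\nu_s/d\nu_t$; these estimates are exactly those used in \cite{Ra1}, \cite{Ra2} to establish the proposition stated just above the corollary, and once they are invoked, the rest of the argument is formal.
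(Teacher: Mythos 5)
Your proposal is correct and follows essentially the same route as the paper: the paper's one-line proof likewise combines the compression $(\A_{t+\varepsilon})_{\chi_t/\chi_{t+\varepsilon}}=\A_t$, obtained from the $\beta_{\overline{\Delta^\varepsilon},\Delta^\varepsilon}$ construction with the nowhere-vanishing Dedekind form (so fractional powers via $\ln\Delta$ are already legitimate there), with the identification $\A_s=L(\Gamma,\varepsilon_s)_{\frac{s-1}{12}}$. You merely make explicit the fullness of corners in factors and the transitivity of Morita equivalence, which the paper leaves implicit.
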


\begin{proof} By the above we have that $(\A_{t+\varepsilon})_{\frac{\chi_t}{\chi_{t+\varepsilon}}}=\A_t$ since $c_s=\frac{s-1}{\pi}$ and $\A_s=L(\Gamma,\varepsilon_s)_{\frac{s-1}{12}}$, for $s>1$, so the 
result follows.
\end{proof}

\

We return to the description of the $\Gamma$-equivariant Berezin's quantization and the description of the constitutive elements, as used in the assumptions made for Theorem 3. It follows that the formula for $\Phi^t_\varepsilon$ is 
$$
\Phi_\varepsilon^t(k)=\frac{\chi_t}{\chi_{t+\varepsilon}}(k\varphi^\varepsilon),\quad k\in\D_{\Phi_t},
$$
where $$\varphi(z,\overline{\zeta})=\ln((z-\overline{\zeta})^{12})+\ln\Delta(z)+
\ln(\overline{\Delta(\zeta)}), z,\zeta\in\bH,$$
 and where we use the standard analytic branch of the logarithm of the non-zero, analytic function $\Delta$. The domain of the maps $\Phi^t_\varepsilon$, $\varepsilon \geq 0$ was constructed explicitly in Chapter 6 in ([Ra2]), but it can also be described in a more axiomatic way, as in assumption SQP1.

The condition that $\Phi_\varepsilon^t\circ D_{-\varepsilon}^t$ is
bounded follows from  
\begin{gather*}
|\Delta(z)\Delta(\overline{\zeta})(\overline{z}-\zeta)^{12}|^2=\\
|\Delta(z)|^2(\Im z)^{12} |\Delta(\zeta)|^2(\Im \zeta)^{12}
\left(\frac{|(\overline{z}-\zeta)|^2}{(\Im z)\Im\zeta}\right)^{12},
\end{gather*}
which is a quantity bounded by $d_\Bbb H^{12}$.

Finally, we note (see also Chapter 6 in [Ra2]) that the domain  of $$\L_t=
\frac{d}{d\varepsilon}\left[\Psi_{t+\varepsilon,t}^{-1}(\beta_{t+\varepsilon,t})\right]|_{\varepsilon=0}=\frac{d}{d\varepsilon}\left[\beta_{t,t-\varepsilon}(\Psi_{t,t-\varepsilon}^{-1}(k))\right]|_{\varepsilon=0},
$$ 
contains all integrals of the form
$$
\int_{\varepsilon_1}^{\varepsilon_2}M_{\Delta^\varepsilon}kM_{\Delta^\varepsilon}d\varepsilon
$$
if $\varepsilon_0<\varepsilon_1<\varepsilon_2$ and $k$ belongs to $\A_{t-\varepsilon_0}$
(rigorously, in the integral we are using $\Psi_{t-\varepsilon,t-\varepsilon_0}(k)$ instead of $k$).

The formula for $\L_t$ as described in ([Ra2] Chapter 6) corresponds to a Toeplitz operator, as in Definition \ref {toeplitz}, of multiplication with an analytic
function. 

More precisely we have that $$\L_t=-\frac{\chi'_t}{\chi_t}1+\M_{\varphi},$$ where by $\M_\varphi$ we denote the ``analytic" Toeplitz
operator of multiplication by $\varphi$, (in the terminology of Definition\ref{toeplitz} this is $\T^{t,\Gamma}_{\varphi})$.

It follows that $\L^0_t$ (the imaginary part of $\L_t$)  is a Toeplitz operator (in the sense of Definition \ref{toeplitz} on the Hilbert space $L^2(\A_t)$
 identified with a space of analytic, antianalytic
functions -respectively in the first and in second variable.

Thus $\langle \L_t^0k,l \rangle=\langle\M_{\varphi_0}k,l \rangle_{L^2(\A_t)}$ where 
$\M_{\varphi_0}$ is the multiplication operator with the function $\varphi_0$, acting on
$L^2(\A_t)$. Consequently $ \L_t^0$  is the Toeplitz operator with symbol $$\varphi_0(z,\overline{\zeta})=i[\arg(z-\overline{\zeta})^{12}
+\arg(\Delta)(z)-\arg(\Delta)(\zeta)],$$

\begin{cor}Recall that,
for $t>1$, the Hilbert space  $L^2(\A_t)$  is identified with  the Hilbert space of analytic-antianalytic functions $k: \bH\times\bH \rightarrow \bC$ (in the first, 
respectively the second variable) diagonally $\Gamma$-invariant, with
norm
$$
\|k\|_{L^2(\A_t)}=
\chi_t\iint_{F\times\bH}|k(z,\eta)|^2d_\bH^t(z,\eta)\, d \nu^2_0(z,\eta).
$$
Then the Toeplitz operator $T_0=\T^{t,\Gamma}_{\varphi_0}=iY^t$ (see Definition \ref{toeplitz}), having a  dense domain as constructed in Corollary 6.6 in [Ra2]), with symbol $$\varphi_0=i(\arg(z-\overline{\zeta})^{12}+\arg\Delta(z)-\arg\Delta(\zeta)),$$
is antisymmetric, with deficiency indices $(0, 1)$, and 
$$T_0^*1=-\frac{\chi'_t}{\chi_t}1=-\frac{1}{t-1}1.$$

\end{cor}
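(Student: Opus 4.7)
The plan is to apply Theorem \ref{ind} to the Berezin $\Gamma$-equivariant quantization setting, using the explicit identifications of $\L_t$ and $\L_t^0$ as Toeplitz operators established in the discussion immediately preceding the corollary. All the abstract assumptions (F.M., D, D$'$, SQP, SQP$_1$) have been verified for this setting in that discussion, so Theorem \ref{ind} is applicable and delivers the decomposition, symmetry, deficiency indices, and adjoint-at-$1$ statements abstractly.

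The key identification step is to match the abstract antisymmetric component $iY_t$ produced by Theorem \ref{ind} with the concrete Toeplitz operator $\T^{t,\Gamma}_{\varphi_0}$. We have established $\L_t = -\frac{\chi'_t}{\chi_t}\,1 + \M_\varphi$ with $\varphi(z,\overline\zeta) = \ln((z-\overline\zeta)^{12}) + \ln\Delta(z) + \ln\overline{\Delta(\zeta)}$. After subtracting the real-part coboundary $R_t$, which corresponds to the real part of $\varphi$ (i.e.\ to $\ln d_\bH^{12}$ up to an explicit scalar, matching the derivative of $\langle\cdot,\cdot\rangle_{L^2(\A_s)}$ in $s$), what remains is the Toeplitz operator whose symbol is the purely imaginary function $\varphi_0 = i[\arg(z-\overline\zeta)^{12} + \arg\Delta(z) - \arg\Delta(\zeta)]$, plus a scalar multiple of $1$. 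Matching the constants with the abstract decomposition $\L_t^0 = \lambda_t\,1 + iY_t$ of Theorem \ref{ind} identifies $T_0 = \T^{t,\Gamma}_{\varphi_0}$ with $iY_t$.

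The three conclusions of the corollary then follow: antisymmetry of $T_0$ is immediate from the symmetry of $Y_t$, since $(iY_t)^* \supseteq -iY_t^* \supseteq -iY_t$ on the dense common domain; the deficiency indices of $T_0 = iY_t$ coincide with those of $Y_t$, namely $(0,1)$; and $T_0^\ast 1 = -\frac{\chi'_t}{\chi_t}\,1$ follows from $(\L_t^0)^\ast 1 = 0$ by taking adjoints in the scalar identity $\L_t^0 = \lambda_t\,1 + T_0$, using $\chi_t = \frac{t-1}{\pi}$ so that $\chi'_t/\chi_t = 1/(t-1)$.

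The main obstacle is to ensure that the dense domain of $\T^{t,\Gamma}_{\varphi_0}$ constructed in Corollary~6.6 of [Ra2] --- namely the image of $D^t_{-\varepsilon}$ in $L^2(\A_t)$, consisting of integrals of the form $\int M_{\Delta^\varepsilon} k M_{\Delta^\varepsilon}\,d\varepsilon$ --- is a genuine core for $Y_t$, so that the abstract $(0,1)$ deficiency count from Theorem \ref{ind} descends to the concrete Toeplitz operator rather than merely to some larger closure. This is secured by assumption SQP$_1$ combined with the Chapman--Kolmogorov relation for $(\Phi^t_\varepsilon)_{\varepsilon\geq 0}$, which together guarantee that this domain is invariant under the semigroup generated by $\L_t$ and is dense enough in $L^2(\A_t)$ to detect the correct defect subspace.
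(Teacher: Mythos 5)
Your proposal follows the paper's own route: the corollary appears there with no separate proof precisely because it is the specialization of Theorem \ref{ind} to the Berezin setting, combined with the identification $\L_t=-\frac{\chi'_t}{\chi_t}1+\M_{\varphi}$ and $\L_t^0=\T^{t,\Gamma}_{\varphi_0}$ carried out in the paragraphs immediately preceding the statement --- exactly the two steps you perform, including the appeal to the domain from Corollary 6.6 of [Ra2]. The only caveat, inherited from the paper's own fluctuating conventions (it writes the scalar variously as $\frac12\frac{\chi'_t}{\chi_t}$ and $\frac{\chi'_t}{\chi_t}$, e.g.\ in Theorem \ref{ind} versus this corollary), is the factor-of-two bookkeeping in the identity $T_0^*1=-\frac{\chi'_t}{\chi_t}1$, which your constant-matching step glosses over in the same way the paper does.
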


We note  the following problem that arises naturally, that we  formulate below, in abstract setting:  determine when 
can one perturb the operator $\Lambda^0_t=-\frac{1}{2}\frac{\chi'_t}{\chi_t}1+iY_t$ by a densely defined
derivation, with the same domain, so that the real part (which is a multiple of the identity) in the coboundary operator $\Lambda^0_t$,  
for $c^0_t$,
is removed.

\begin{prob}
Let $M$ be a type II$_1$ factor with trace $\tau$, let $Y$ be a symmetric operator
with dense domain a $\ast-$algebra $\D^0=\D^0_Y$ in $L^2(M,\tau)$
such that $(iY)^*1=-\alpha 1$,  where $\alpha >0$, and so that $Y$ has deficiency indices $(0, 1)$. Here the adjoint is taken with respect to the action on the  Hilbert space $L^2(M,\tau)$. We also assume that $Y$ takes values in $M$ rather than $L^2(M,\tau)$.

Let $Z=\alpha1+iY$, so that  $Z^* 1=0$.
Let $$c^0(k, l)=Z(kl)-kZ(l)-Z(k)l, \ \  k, l \in \D^0_Y.$$ Assume in addition that the identity element $1$ may be adjoined to the domain of $c^0$ by taking graph closure and that $c^0(1, 1)=0$.
This is an additional hypothesis, as $Y$ is not defined at $1$.
Note that here we are automatically assuming that $c^0$ coincides with its imaginary part in the sense of Definition \ref {im}. 

The problem is then stated as follows:

Determine if there exists a  densely defined derivation $\delta$ with dense $\ast-$ algebra domain $\D(\delta)$, so that 

i)$Y_1= Y+\delta$ is densely defined and $\D^0_Y\cap\D(\delta)$ is a core for $Y_1$.

ii) The real part of $\delta$, in the sense of Hilbert space operators acting on $L^2(M,\tau)$, is  $\Re\delta=-\alpha 1$ (note  that in this case $\alpha 1 +iY_1+\delta$  is antisymmetric).

iii) $(iY_1)^{\ast}(1)=0$), and  $1$
is in the domain of the closure of $Y_1$.

Note this amounts to find a coboundary  for $c^0$ that, as an operator acting on $L^2(M,\tau)$, is antisymmetric with (0,1) deficiency indices.
\end{prob}
\

\

For the cocycle $c^0_t$ constructed explicitly in Chapter 1 for the Berezin quantization, this represents
the obstruction to replace the operator $\L^0_t$ by an antisymmetric operator, 
that is to find $\L_t$ so that $\L_t1=\L^*_t1=0$ and so that $\L_t$ implements $c_t$ and 1 in the domain
of $\L_t$.  

This would happen, for example, if there  exists a unique  finite von Neumann algebra $\A$ and there exists $\alpha_t:\A_t\rightarrow\A$, so that $\alpha_t$
are all isomorphisms. In this case we may   transfer the maps $\Psi_{s,t}$, $s\geq t>1$, using the isomorphisms $\alpha_t$
into a Chapman-Kolmogorov system of unital completely positive maps $\tilde{\Psi_{s,t}}$ acting
on $\A$.

Then  the corresponding Hochschild cocycle $\tilde{c_t}$ is simply implemented by 
$\tilde{\L_t}=\frac{d}{ds}\tilde{\Psi_{s,t}}|_{s=t}$, which obviously is $0$ at $1$ and $1$ is in the domain of $\tilde{\L_t}$. 
To see this, one  could apply again the argument in Theorem \ref{ind} , with $\tilde{\beta_{s,t}}={\rm Id}$, 
as all algebras are now equal.

The following remark  shows why  the condition $c(1, 1)=0$ is necessary.

\begin{rem}
Assume $c$ on $M$ is an unbounded cocycle with $\tau(c(k, l))=0$.
If 1 is in the domain of $c$, then $c(1, 1)=0$.
\end{rem}

\begin{proof} From the cocycle condition we have that $m\,c(1, 1)=c(m, 1)$.
Hence $\tau(m(c(1, 1))=0$ for all $m$ in the domain, and thus $c(1, 1)=0$, as
we assumed that  the domain is dense.
\end{proof}

Finally we show  that the structure and the properties of the deformation $(\A_t)_{t>1}$ and of the corresponding
maps $\beta_{s,t}$, $\Psi_{s,t}$, $s\geq t>1$,  can be deduced
from  the ``rigged" Hilbert space structure on $(H_t)_{t>1}$, determined by a specific chain of embeddings, as described below.

For simplicity, we use  the model of the unit disk, so that the Berezin quantization is realized using the Hilbert spaces:
 $$H_t=H^2(\bD,(1-|z|^2)^{t-2}d\overline{z}dz.$$
 
  We describe the properties of this ``rigged" Hilbert space structure. Note that this could be used to obtain an even more general abstract formulation of the Berezin quantization.

\begin{defn}
 Assume we have a family of unitary, projective, representations $\pi_t$ of $\Gamma$ on a family of Hilbert
spaces $(H_t)_{t>1}$, that are finite  multiples (see [GHJ], [Ra1]) of the left regular representation, taken with the cocycle corresponding  to the projective unitary representation. Thus we assume, using the notations from [GHJ], that 
$$\dim _{\{\pi_t(\Gamma)\}''}H_t=\chi_t,\  \  t>1.$$

 We also assume the existence two collections of bounded maps $j_{s,t}$, $\Delta_{s,t}:H_t\rightarrow H_s$, $s\geq t>1$,
verifying Chapman-Kolmogorov conditions for $s\geq t\geq r$.

 We assume that maps $\Delta_{s,t}$ intertwine the representations $\pi_s$ and $\pi_t$ of the group $\Gamma$.  In the particular case of Berezin's quantization, with the Hilbert spaces considered above, the maps $j_{s,t}$ are the 
obvious embeddings, while $\Delta_{s,t}$ are the multiplication operators  by $\Delta^{s-t}$.

 In addition, we assume that  the
following diagram is commutative, for $s\geq t>1$, $\varepsilon>0$:
$$
\begin{array}{ccc}
H_t & 
\mathop{\longrightarrow}\limits^{\Delta_{s,t}}& H_{s}  \\
\hskip-0.8cm{}_{j_{t,t-\varepsilon}} \Big\uparrow{}& &\hskip0.4cm\Big\uparrow{}_{j_{s,s-\varepsilon}} \\ [8pt]
H_{t-\varepsilon} & \mathop{\longrightarrow}\limits_{\Delta_{s-\varepsilon,t-\varepsilon}}& H_{s-\varepsilon} \\ 
\end{array}
$$
Furthermore, we assume that $j_{s,t}$, $\Delta_{s,t}$ are injective and that   $j_{s,t}$ has dense range.

 It automatically follows that  the orthogonal projection $E^s_t$ onto the
closure of the range of $\Delta_{s,t}$, belongs to $\A_s$, and its   Murray von Neumann dimension (trace) is equal to $\frac{\chi_t}{\chi_s}$.

Also, it automatically follows that the linear, densely defined maps $\Phi^t_\varepsilon$ from $H_t$ into $E^t_{t-\varepsilon}H_t$,
defined on $\Im j_{t,t-\varepsilon}$ by the formula
$$
\Delta_{t-\varepsilon,t}\circ j^{-1}_{t,t-\varepsilon}=j^{-1}_{t,t+\varepsilon}\circ\Delta_{t,t+\varepsilon}
$$
form a semigroup. We observe that the ranges of $j_{t-\varepsilon, t}$, $\Delta_{t-\varepsilon, t}$ are
increasing with $\varepsilon$, by the Chapman-Kolmogorov property. In particular $\chi_t$ is an increasing function of $t$.

We call a structure as above a "rigged"chain of projective, unitary representations of the group $\Gamma$.
\end{defn}

\

Given such a structure, we may define the maps $j_{t, t-\varepsilon}\cdot j^*_{t, t-\varepsilon}$
from $\A_{t-\varepsilon}$ onto $\A_t$, by mapping $a\in \A_{t-\varepsilon}$ into 
 $j_{t, t-\varepsilon}a j^*_{t, t-\varepsilon}\in \A_t$ for $1<t-\varepsilon$.
 
Note that in the case of the Berezin quantization, the generator of  $\Phi^t_\varepsilon$ is
$T^t_{\ln\Delta}$, the  unbounded Toeplitz operator on $H_t$ with analytic symbol $\ln\Delta$.

In this case, for a kernel $k$ representing an operator in $\A_t$, for $s\geq t$, the operator $j_{s,t} kj_{s,t}^*\in \A_s$ has symbol $\frac{\chi_s}{\chi_t}
(1-z\overline{\zeta})^{t-s} k(z, \zeta)$. 

Consequently the infinitesimal generator  $$\frac{d}{ds}\left[ j_{s,t} \cdot j_{s,t}^*\right]|_{s\rightarrow t_+}$$
 is exactly the Toeplitz  operator
$\T_{-\frac{1}{2} \frac{\chi'_t}{\chi_t}+ \ln(1-z\overline{\zeta})}$, that is constructed in the next section, in Definition \ref{gentop}.

The operators $j^*_{s,t} j_{s,t}$  are exactly the Toeplitz operators with  symbol $(1-|z|^2)^{s-t}$, acting on  on $H_t$. Likewise the operator  
$j_{s,t}$ $j^*_{s,t}$, acting  on $H_s$, is the inverse of the unbounded Toeplitz 
operator on $H_t$ with symbol $(1-|z|^2)^{t-s}$.

In particular, most of the properties of the domain of $\L_t$, can be read
from properties of the semigroup $\Phi^t_\varepsilon$ at the level of Hilbert
spaces, because of the following formula:
$$
\Phi^t_\varepsilon(a)=(j^{-1}_{t+\varepsilon, t}\circ \Delta_{t, t+\varepsilon}) a
(\Delta_{t, t+\varepsilon}^*\circ {(j^{-1}_{t, t+\varepsilon})^\ast}),\  a\in \A_t.
$$

For fixed $t>1$, consider the completely positive semigroup of operators $(\Theta^t_\varepsilon)_{\varepsilon \geq 0}$ acting on $B(H_t)$, defined by multiplying the reproducing kernel of an operator with the positive definite kernel on $\Bbb H^2$, which in this model is identified with the unit disk $\Bbb D$, defined on
$\Bbb D\times \Bbb D$ by:
$$(z,\eta) \rightarrow \frac {1} {(1-z\overline{\eta})^\varepsilon}, z,\eta \in \Bbb D.$$
Thus for a kernel $k$ representing an operator in $B(H_t)$, the formula for the kernel
of the operator $\Theta^t_\varepsilon(k) \in B(H_t)$ is
$$\Theta^t_\varepsilon(k)(z,\overline{\eta})=k(z,\overline{\eta})\frac {1} {(1-z\overline{\eta})^\varepsilon},\ \  z,\eta \in \Bbb D, \varepsilon \geq 0.$$
The positivity properties mentioned above were proved in the second chapter in [Ra2]. Note that $\Theta^t_\varepsilon$ is the Toeplitz operator introduced in Definition \ref{gentop}, with symbol 
$\frac{1}{(1-z\overline{\eta})^\varepsilon}$.

 Using this semigroup, it is easily observed that we may also reconstruct the symbol map. Indeed, for $s\geq t > 1$, we have the following equality:
$$\Psi_{s,t}(a)=\frac{\chi_s}{\chi_t}j_{s,t} \Theta^t_{s-t}(a) j_{s,t}^\ast,\ \  a\in \A_t.$$

\ 

\

\section{Higher order Berezin symbols}

In this section we use the machinery of Berezin quantization,
by using the symbols not just to define linear operators, but rather to define (eventually unbounded) linear
operators on spaces of operators. We will also  explain  how the model can be
applied to multilinear operators.

Recall, from last section, that any bounded operator $A$ is $B(H_t)$ 
is represented by a kernel $k_A: \bH\times\bH\rightarrow \bC$, in such
a way that
$$
Af(z)=\chi_t\int_{\bH}\frac{k_A(z,\overline{\eta})}{({z}-\overline{\eta})^t}
f(\eta)d\nu_t(\eta),\ \ 
z\in \bH, f\in H_t.$$
 Recall that $k_A$ is analytic in $z$ and antianalytic in
$\eta$. Here $\chi_t=\frac{t-1}{\pi}$.

The Hilbert space $\cC_2(H_t)$ of Hilbert-Schmidt operators induces 
the scalar product on the space of reproducing  kernels $k$, $l$,  representing operators $\cC_2(H_t)$ given by the formula
$$
\langle k,l \rangle=\langle k,l \rangle_{\cC_2(H_t)}=
\chi_t\iint_{\bH^2}k(z,\overline{\eta})\overline{l(z,\overline{\eta})}d_\Bbb H^t(z,\eta)
d\nu^2_0(z,\eta).
$$
For the space $L^2(\A_t)$, if $k, l\in L^2(\A_t)$, the scalar product formula is 
$$
\langle k,l \rangle=
\chi_t\iint_{F\times\bH}k(z,\overline{\eta})\overline{l(z,\overline{\eta})}d_{\Bbb H}^t(z,\eta)
d\nu^2_0(z,\eta),
$$
where $F$ is a fundamental domain for the action of the group $\Gamma$ on $\bH$.

\begin{defn}\label {toeplitz}
Note that both $\cC_2(H_t), L^2(\A_t)$ are spaces of functions
analytic in the first variable, antianalytic in the second.
Hence, for every measurable function $d$ on $\bH\times\bH$, we define an unbounded
Toeplitz operator (bounded if $d$ is bounded) by the~formula
$$
\langle \T_d^t k,l \rangle=
\chi_t\iint_{\bH^2}
d(z,\eta)k(z,\overline{\eta})\overline{l(z,\overline{\eta})}d^t_\bH(z,\eta) d^2\nu_0(z,\eta).
$$

In the case of $L^2(\A_t)$, if $d(z,\zeta)=d(\gamma z, \gamma\zeta)$, $\gamma\in\Gamma$,
$z, \zeta\in \bH$, by replacing, in the previous formula, the integration domain $\bH\times\bH$ by the
integration domain $F\times\bH$,  we obtain a Toeplitz operator $\T^{t,\Gamma}_d$ densely
defined on $L^2(\A_t)$. Recall that $F$ is a fundamental domain for the action of $\Gamma$ on $\bH$.
\end{defn}

\

Certainly, the two different types of Toeplitz operators are related,
and  we also define a third type of Toeplitz operator as 
follows:

\begin{defn}\label{gentop} Let $d:\bH\times\bH\rightarrow \bC$ be a measurable function.
We define a linear operator $Q_d=Q^t_d$ on a subspace $\D(Q_d)$  of $B(H_t)$, by the
pairing formula
$$
\langle Q_d(k), l \rangle = \Tr(Q_d( k)l^*)= \chi_t\iint_{\bH^2}d(z,\eta)k(z,\overline{\eta})\overline{l(z,\overline{\eta})}
d_\bH^t(z,\eta) d\nu^2_0(z,\eta).
$$

Here $l$ runs over the nuclear operators $\cC_1(H_t)$, and the domain $\D(Q_d)$ of $Q_d$
consists in all $k\in B(H_t)$, such that $Q_d(k)$ defined by the above formula is a 
bounded operator on $H_t$. So $Q_d$ is an unbounded operator on the Banach space $B(H_t)$.
\end{defn}

It is not necessary that $d$ bounded implies that $Q_d$ is 
bounded.
If $d$ is diagonally $\Gamma$-invariant, then
$Q_d\mid L^2(\A_t)\cap\D(Q_d)$ is the Toeplitz operator $\T^{t,\Gamma}_d$
introduced in the previous definition. Similarly $Q_d$, by restriction to $\mathcal C_2(H_t)$, is a Toeplitz operator,   if the domain intersection is non-void. 

Clearly,
$Q_{\overline{d}}$ restricts to the adjoint, but, apparently,  there is no functorial method of
constructing $Q_{\overline{d}}$, in an operator theoretic method, starting
from $Q_d$ (see more on this below).

Note that most of the above definitions could  also be introduced for distributions
$d(z,\zeta)$ acting on a dense subspace of the linear span of
functions of the form $k(z,\overline{\eta})\overline{ l(z, \overline{\eta})}$. In this case we 
will use the notation 
$$
\langle d(z,\zeta), k(z,\overline{\eta})\overline{l(\overline{z},\eta)} \rangle
$$
for the evaluation form associated to the distribution.

We collect below a number of elementary properties of these operators.

\begin{prop} We use below the notations from the previous two definitions.

{\rm 1)} If we restrict the operator $Q_d$ to $\cC_2(H_t)$ or $L^2(\A_t)$, then 
$Q_{\overline{d}}\mid\cC_2(H_t)$ or respectively 
$Q_{\overline{d}} \mid L^2(\A_t)$
is contained in the adjoint of $Q_d$.

{\rm 2)} Recall  that given a kernel $k$ representing the symbol of a bounded operator on $H_t$, the symbol  $k^*$  of the adjoint operator  is given by the formula
$k^*(z, \overline{\eta})=\overline{k(\eta, \overline{z})}$, 
$z, \eta\in \bH$.

Given $d$ as in the previous definitions, we introduce $s(d)(z,\zeta)=\overline{d(\zeta, z)}$
for $z, \zeta\in \bH$.

Let $C_d(k)$ be defined by the formula $C_d(k)=Q_d{(k^*)}^*$.
Assuming that the domain  $\D_{Q_d}$ of the operator $Q_d$  is closed under the adjoint operation, it 
follows that $C_d=Q_{s(d)}$.

{\rm 3)} If $d$ is a distribution as above, then the condition that $Q_d$ is
a derivation is that the measures
$$
\int_\bH d_{13}d\nu_t^2 {\   and\  } \int_\bH d_{12}d\nu_t^3+
\int_\bH d_{23}d\nu_t^1,
$$
coincide when evaluated on functions on $\bH^3$, in the variables
$z,\eta,\zeta\in \bH$, belonging to  the space 

$$\M=Sp\left\{\frac{k(z,\overline{\eta})}{(z-\overline{\eta})^t}\cdot
\frac{l(\eta,\overline{\zeta})}{(\eta-\overline{\zeta})^t}\cdot\frac{m(\zeta,\overline{z})}{(\zeta-\overline{z})^t}
\mid k, l, m\in \D({Q_d})\right\}.$$

By $d_{13}$ we mean the distribution $d$ acting on first and third
variable, and the integral is evaluated in the second variable. Similarly 
for the other two integrals$.)$ In the above formula, by integration with respect to the measure $d\nu_t^j$, $j=1,2,3$, we designate integration by the $j$-th variable from the sequence $z,\eta,\zeta\in \bH$.

In general, if $Q_d$ is a derivation it does not imply that 
$Q_{\overline{d}}$ is a derivation.
The above formula proves that this would happen if the spaces $\M$ and $\overline{\M}$ coincide  $(\overline{\M}$ is
the complex conjugate subspace of $\M$ $)$.

{\rm 4)} If $Q_d\mid L^2(\A_t)\cap \D({Q_d})$ is a derivation, and if $d$ is 
diagonally $\Gamma$-invariant $($so that the range is in $\A_t)$, then the
condition on $d$,  for $Q_d$ to be a derivation, is similar, the only difference being that we replace 
the space $\M$ by the space $\M_\Gamma$ of diagonally $\Gamma$-invariant functions on
$\bH$, obtained by requiring that $k, l, m$  correspond to diagonally 
$\Gamma$-invariant operators.

The $\Gamma$ invariance property for  functions $\Theta\in \M_\Gamma$, is
$$
\Theta(\gamma z,\gamma\zeta,\zeta\eta)=\Theta(z,\zeta,\eta)|j(\gamma,z)|^2|j(\gamma,\zeta)^2|\,
|j(\gamma,\eta)^2|
$$
for all $\gamma\in \Gamma$, $z, \zeta, \eta$ in $\bH$.

{\rm 5)} The conditions in  3), 4) above are verified if
$$
d(z, \zeta)=d_1(z)-d_1(\zeta),\ \  z,\eta \in \Bbb H,
$$
for some measurable function $d_1$ on $\bH$.
If $Q_d$ is real, that is $Q_d(k^*)=Q_d(k)^*$, for all $k$ in the domain, then 
$d$ takes only imaginary values.

In particular, $Q_{d}$ is antisymmetric in this case.

{\rm 6)} For $k$ in the domain of $Q_d$ the formula for 
$
\langle Q_d(k)v_1,v_2\rangle$, is
$$
\langle Q_d(k) v_1,v_2\rangle_{H_t}=\langle d(z,\zeta), k(z,\overline{\zeta})v_1(z)
\overline{v_2(\zeta)}\rangle,\ \  v_1, v_2\in H_t 
$$

{\rm 7)} If $d$ depends only on the first (or second) variable and is a 
measurable function, then
$Q_d(k)=T^t_d k$ $($respectively $Q_d(k)=kT^t_d)$ where $T^t_d$ is the $($unbounded$)$
Toeplitz operator acting on $H_t$ with symbol $d$.

{\rm 8)} Let $a(z,\zeta)=\arg(z-\overline{\zeta})$, $z, \zeta\in \bH$.
Then $(iQ_a)1=-\frac{1}{2}\frac{\chi'_t}{\chi_t} 1$.
However on $\cC_2(H_t)$, $iQ_a$ is antisymmetric.

\end{prop}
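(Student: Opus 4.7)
The master identity is the defining pairing
$$\Tr(Q_d(k)l^*) = \chi_t \iint d(z,\eta)\, k(z,\overline{\eta})\, \overline{l(z,\overline{\eta})}\, d_\bH^t\, d\nu_0^2$$
from Definition \ref{gentop}; the inner product on $\cC_2(H_t)$ and on $L^2(\A_t)$ is just this pairing specialized to appropriate test kernels, so nearly every part of the proposition will come out of manipulating this integrand. Item 1 is the symmetry that moves conjugation from $\overline{l}$ onto $\overline{d}$, identifying $Q_d^*$ on $\cC_2(H_t)$ or $L^2(\A_t)$ with $Q_{\overline{d}}$. Item 2 substitutes $k \to k^*$, uses $k^*(z,\overline{\eta}) = \overline{k(\eta,\overline{z})}$, and then relabels $(z,\eta) \leftrightarrow (\eta,z)$, which produces the swapped-conjugated symbol $s(d)$. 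Item 7 is the separation-of-variables observation that if $d$ depends on only one of its arguments, pulling it out of the integration yields left (resp.\ right) multiplication by the Toeplitz operator $T_d^t$. Item 5 is then immediate: $d(z,\zeta) = d_1(z) - d_1(\zeta)$ gives $Q_d(k) = [T_{d_1}^t, k]$, automatically a derivation; the reality hypothesis forces $T_{d_1}^t$ skew-adjoint, whence $d_1$ is imaginary and antisymmetry of $Q_d$ follows from item 1 since $\overline{d} = -d$.

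\textbf{Items 3, 4, 6.} For item 3, I would expand the Berezin product $(kl)(z,\overline{\zeta})$ as a $\bH$-integral in an auxiliary variable and pair the derivation defect $Q_d(kl) - Q_d(k)l - kQ_d(l)$ against a third test kernel $m$. Each of the three pieces becomes a triple integral over $\bH^3$ with the common integrand $k(z,\overline{\eta})\, l(\eta,\overline{\zeta})\, m(\zeta,\overline{z})\, (z-\overline{\eta})^{-t}(\eta-\overline{\zeta})^{-t}(\zeta-\overline{z})^{-t}$, paired with $d$ evaluated on $(z,\zeta)$, $(z,\eta)$, or $(\eta,\zeta)$ respectively; the derivation identity is then exactly the distributional equation stated in item 3, tested on $\M$. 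Item 4 is the $\Gamma$-equivariant restriction to diagonally $\Gamma$-invariant test kernels and proceeds identically. Item 6 specializes the master identity to the rank-one test $l = v_2 \otimes v_1^*$, whose Berezin kernel $\chi_t^{-1} v_2(z)\, \overline{v_1(\eta)}\, (z-\overline{\eta})^t$ is read off from the reproducing formula of $H_t$; substituting and simplifying gives the asserted matrix-element formula.

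\textbf{Item 8, the main obstacle.} The antisymmetry of $iQ_a$ on $\cC_2(H_t)$ is immediate from item 1: since $a = \arg(z - \overline{\zeta})$ is real-valued, $Q_a^* = Q_{\overline{a}} = Q_a$, and hence $(iQ_a)^* = -iQ_a$. The substantive identity $(iQ_a)(1) = -\tfrac{1}{2}\tfrac{\chi_t'}{\chi_t}\cdot 1$ has to be proved in $B(H_t)$, where the identity operator has constant symbol $1$. The plan is to apply item 6 with $k = 1$, reducing $\langle Q_a(1)v_1, v_2\rangle_{H_t}$ to a double integral of $\arg(z-\overline{\zeta})$ against the rank-one weight, and then to use the decomposition
$\arg(z-\overline{\zeta}) = \tfrac{1}{2i}\bigl[\ln(z - \overline{\zeta}) - \ln(\overline{z} - \zeta)\bigr]$
to split the computation into two conjugate pieces. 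Each piece is then evaluated by differentiating the normalization $(1 \star_s 1)(z,\overline{\zeta}) = 1$ at $s = t$, which yields
$$\frac{\chi_t'}{\chi_t} + \chi_t \int_\bH [z,\overline{\eta},\eta,\overline{\zeta}]^t \ln[z,\overline{\eta},\eta,\overline{\zeta}]\, d\nu_0(\eta) = 0;$$
taking imaginary parts extracts the required $\arg$-integral, and the factor $-\tfrac{1}{2}$ arises from the symmetric distribution of the total contribution $\chi_t'/\chi_t$ between the two conjugate logarithm terms. The most delicate technical step is justifying that the constant symbol $1$ genuinely lies in the $B(H_t)$-domain of $Q_a$, and that the resulting operator is an honest scalar multiple of the identity rather than merely agreeing with one distributionally; this will require approximating $1$ by truncations of reproducing-kernel powers and verifying that the formal differentiation converges to a bona fide bounded operator.
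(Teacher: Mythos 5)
Your proposal tracks the paper's (very terse) proof essentially step for step: items 1--2 by moving the conjugation inside the defining pairing, items 3--4 by pairing the derivation defect $Q_d(kl)-kQ_d(l)-Q_d(k)l$ against a third trace-class kernel $m$ and writing out the Berezin product as a triple integral over $\bH^3$, item 6 via the rank-one test kernel $l=v_2\otimes v_1^*$, items 5 and 7 by the commutator/separation-of-variables reading, and item 8 by differentiating in $t$ a normalization identity for the reproducing kernel. The only divergence is cosmetic: in item 8 the paper differentiates the diagonal identity $\chi_t\int_\bH (z-\overline{\zeta})^{-t}\,d\nu_t=(\Im z)^t$, whereas you differentiate its polarized form $(1\star_s 1)=1$ (i.e.\ $c_t(1,1)=0$) and split $\arg(z-\overline{\zeta})$ into conjugate logarithms --- an equivalent computation, with the mild advantage that your version makes explicit both the origin of the factor $-\tfrac12$ and the domain question of whether the constant symbol $1$ genuinely lies in $\D(Q_a)$, points the paper glosses over.
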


The properties $1)$, $2)$ are obvious. The property $3)$ is deduced
as follows: we start with the equality
$$
Q_d(kl)=kQ_d(l) + Q_d(l)k,\quad k,l\in \D_{Q_d}
$$
and pair this with $m\in \cC_1(H_t)$.
We obtain an identity of the form
$$
\langle Q_d(kl),m\rangle=\langle Q_d(l),k^* m\rangle+\langle Q_d(l),km^*\rangle
$$
which by writing explicitly the multiplication gives
the identity in the statement.

Property $4)$  follows similarly, using the trace on
$\A_t$ instead of the pairing.

Property $5)$ is a consequence of the definition,  taking 
$m\in\cC_1(H)$ to be the kernel associated to $\langle v_1,\cdot\,\rangle v_2$.

Property $6)$ is a consequence of Property $5)$. 

Property $7)$ is deduced
by differentiating in  $t$ the identity
$$
\chi_t\int_{\bH} \frac{1}{(z-\overline{\zeta})^t} d\nu_t=(\Im z)^t.
$$

We observe that some of the previous definitions may be extended to multilinear
operators. In particular, we have:

\begin{prop}
{\rm  1)} If $\Theta:\bH^{n+1}\rightarrow \bC$ is an Alexander
Spanier $n$-th cocycle on $\bH^{n+1}$ ([Gh]) and $\Theta$ is diagonally 
$\Gamma$-invariant,  then the following formula,  ($z,{\zeta}\in \bH$)
\begin{gather*}
c_\Theta(k_1,k_2,\ldots,k_n)(z,\overline{\zeta})=
\chi_t^{n+1}(z-\overline{\zeta})^t\\
\iint\ldots\int_{\bH^{n+1}}\frac{\Theta(\eta_1\ldots\eta_{n+1})k_1(\eta_1,\overline{\eta_2})
\ldots k_n(\eta_n,\overline{\eta_{n+1}})}
{(z-\overline{\eta_1})^t(\eta_1-\overline{\eta_2})^t\ldots
(\eta_{n+1}-\overline{\zeta})^t} d\nu_t^{n+1}(\eta_1\ldots\eta_{n+1})
\end{gather*}
defines a densely defined $n$-Hochschild cocycle on a dense
subspace $\D$ of $\A_t$. ($k_1,...,k_n\in \D)$.

{\rm  2)} In the case $n+1=3$, we obtain a densely defined Hochschild
2-cocycle
$$
c_\Theta(k,l)(z,\overline{\zeta})\!=\!\chi_t^3(z-\overline{\zeta})^t\iiint_{\bH^3}\frac{\Theta(\eta_1,\eta_2,\eta_3)k(\eta_1,\overline{\eta_2})
l(\eta_2,\overline{\eta_3})}{(z\!-\!\overline{\eta_1})^t(\eta_1\!-\!\overline{\eta_2})^t(\eta_2\!-\!\overline{\eta_3})^t
(\eta_3\!-\!\overline{\zeta})}d\nu^3_t(\eta_1,\eta_2,\eta_3).
$$

{\rm  3)} If $\Theta(\eta_1,\eta_2,\eta_3)= \phi(\eta_1,\eta_3)-\phi(\eta_1,
\eta_2)-\phi(\eta_2,\eta_3)$, for a measurable, $\Gamma$-diagonally
invariant function $\phi$ on $\bH$, then let
$$
X_\phi(k)(z, \overline{\zeta})=c^2_t\iint_{\bH^2}\frac{\phi(\eta_1,
\eta_2)k(\eta_1,\overline{\eta_2})}{(z-\overline{\eta_1})^t(\eta_1-
\overline{\eta_2})^t(\eta_2-\overline{\zeta})^t}d\nu_t^2(\eta_1,\eta_2).
$$
Here the domain consists in all $k$ such that the above integral defines an element in $\A_t$.
Then on the joint domain for $c_\Theta$, $X_\phi$ we have 
$$
c_\Theta(k_1,k_2)= X_\phi(k_1 k_2)-k_1 X_\phi(k_2)-X_\phi(k_1)k_2.
$$
Note that in fact for $k,l$ in the domain, we have 
$$
\langle X_\phi k, l\rangle_{L^2(\A_t)}=\chi_t\iint_{F\times\bH}\phi(\eta_1,\eta_2)k
(\eta_1,\overline{\eta_2})\overline{l(\overline{\eta_1},\eta_2)}d\nu_t(\eta_1,
\eta_2).
$$
Thus $X_\phi$ is in fact the Toeplitz operator $\T_\phi^{t,\Gamma}$ introduced in Definition \ref{toeplitz}.

{\rm  4)} Clearly we have the following equality: $$c_\Theta(k^*_n, k^*_{n-1},\ldots,k^*_1)=
{c_{\overline{\Theta}}(k_1, k_2,\ldots, k_n)}^*,$$
if the elements $k_1$, $\ldots,k_n$ belong to the domain of the corresponding cocycles and $\overline{\Theta}$ is the conjugate of the function $\Theta$.

{\rm  5)} In particular, if
$$
\Theta(\eta_1,\eta_2,\ldots,\eta_{n+1})= \Theta(\eta_2,\eta_3,\ldots,\eta_{n+1},
\eta_1)= \overline{\Theta(\overline{\eta_{n+1}},\overline{\eta_n},\ldots,
\overline{\eta_1})}
$$
then $\Psi_\Theta(k_1,\ldots,k_{n+1})=\tau_{\A_t}(c_\Theta
(k_1,\ldots,k_n)k_{n+1})$ is a cyclic cohomology cocycle defined on the domain of the corresponding cocycles.

\end{prop}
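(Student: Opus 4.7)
The five items reduce to two calculations: item (1), identifying the Hochschild coboundary of $c_\Theta$ with the Alexander--Spanier coboundary $\delta\Theta$ inside the integrand, and item (3), identifying $X_\phi$ with the Toeplitz operator $\T^{t,\Gamma}_\phi$ after substituting the coboundary form of $\Theta$. Items (2), (4), (5) are formal consequences.

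For item (1), I view the integrand of $c_\Theta(k_1,\ldots,k_n)(z,\overline{\zeta})$ as $n+1$ reproducing-kernel factors chaining $z$ through $\eta_1,\ldots,\eta_{n+1}$ to $\zeta$, with $k_i$ inserted in the slot $(\eta_i,\overline{\eta_{i+1}})$ and the whole weighted by $\Theta(\eta_1,\ldots,\eta_{n+1})$. I then apply the Hochschild coboundary, obtaining $n+2$ terms: two external multiplications (by $k_0$ on the left and $k_n$ on the right) and $n$ internal contractions of adjacent pairs $k_{i-1}k_i$. Each internal contraction is computed by the binary Berezin product formula $\star_t$: the new intermediate variable it introduces plays the role of the deleted $\eta_i$, and the resulting integrand is precisely that of $c_\Theta$ on $n+1$ kernels but with $\Theta$ replaced by its $i$-th face, i.e.\ with the variable $\eta_i$ omitted. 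The two external multiplications similarly collapse the first and last links of the reproducing-kernel chain and produce the deletions of $\eta_1$ and $\eta_{n+1}$. Summing with the Hochschild signs gives the full Alexander--Spanier coboundary of $\Theta$ inside the integrand, which vanishes by hypothesis.

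For item (3), I substitute $\Theta(\eta_1,\eta_2,\eta_3)=\phi(\eta_1,\eta_3)-\phi(\eta_1,\eta_2)-\phi(\eta_2,\eta_3)$ into the $n=2$ integral of (2). In each summand $\phi$ depends on only two of the three intermediate variables, so the integration over the remaining variable collapses via the binary Berezin product: the first summand collapses $\eta_2$, yielding $X_\phi(k_1k_2)$; the second collapses $\eta_3$ against $k_2$, yielding $X_\phi(k_1)k_2$; the third collapses $\eta_1$ against $k_1$, yielding $k_1 X_\phi(k_2)$. To identify $X_\phi$ with the Toeplitz operator $\T^{t,\Gamma}_\phi$ of Definition \ref{toeplitz}, I pair $X_\phi(k)$ with an arbitrary $l\in L^2(\A_t)$: the outer $z,\zeta$-integration against $l$ collapses by the reproducing-kernel property, and after using $\Gamma$-invariance to restrict the integration to $F\times\bH$ the result matches the Toeplitz pairing exactly.

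For items (4) and (5), I conjugate the integrand of $c_\Theta$ and apply the identities $\overline{(a-\bar b)^t}=(b-\bar a)^t$ and $k^*(z,\overline{\eta})=\overline{k(\eta,\overline{z})}$; after the change of variables $\eta_i\mapsto\eta_{n+2-i}$ the integrand matches that of $c_{\overline{\Theta}}$ applied to the reversed starred arguments, proving (4). Item (5) follows by tracing (4) against $k_{n+1}$ and using cyclicity of $\tau_{\A_t}$ together with the assumed cyclic symmetry of $\Theta$. The main obstacle is the bookkeeping in item (1): one must verify that the binary Berezin product, when inserted internally in the chain of reproducing-kernel factors defining $c_\Theta$, produces exactly the factor structure needed to match the $\hat{\eta_i}$-deleted face of $\Theta$, and that the two external multiplications produce the endpoint deletions. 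This is a deterministic calculation but requires careful tracking of which $(z-\overline{\eta})^t$ factors appear in the numerator versus the denominator.
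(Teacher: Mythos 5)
Your proof is correct, and it supplies exactly the verification the paper leaves implicit: the proposition is stated without proof, and the intended mechanism is precisely yours — inserting the binary $\star_t$ product into the chain of kernel factors shows that the Hochschild coboundary of $c_\Theta$ is $c_{\delta\Theta}$, with the two external multiplications giving the endpoint face maps and each internal contraction giving the face deleting $\eta_i$ (the new numerator factor $(\mu_1-\overline{\mu_2})^t$ from $\star_t$ cancels the corresponding old denominator factor, extending the chain), so $\delta\Theta=0$ kills $bc_\Theta$; the same collapse via the reproducing property handles (3), and conjugation plus the reversal $\eta_i\mapsto\eta_{n+2-i}$ handles (4) and (5). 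Two minor points for your bookkeeping: your identity $\overline{(a-\overline{b})^t}=(b-\overline{a})^t$ holds in the conjugate-symmetric normalization of the powers (i.e.\ powers of $(a-\overline{b})/i$, so that $\overline{K(z,\eta)}=K(\eta,z)$) which the symbol calculus implicitly uses — with a raw principal branch each factor acquires a phase $e^{-i\pi t}$, and consistency of the calculus is what forces the normalized convention; and in (3) your computation silently restores the prefactor $(z-\overline{\zeta})^t$ absent from the paper's displayed formula for $X_\phi$, evidently a typo since without it neither the coboundary identity nor the Toeplitz pairing $\langle X_\phi k,l\rangle=\langle \T^{t,\Gamma}_\phi k,l\rangle$ is consistent. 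Also, in (5) note that the plain cyclicity of $\Psi_\Theta$ comes directly from the cyclic invariance of $\Theta$ in the trace-integral representation (together with traciality of $\tau_{\A_t}$ for $b\Psi_\Theta=0$), while tracing (4) against $k_{n+1}$ yields the conjugation symmetry $\Psi_\Theta(k_1,\ldots,k_{n+1})=\overline{\Psi_\Theta(k_{n+1}^*,\ldots,k_1^*)}$ rather than cyclicity itself.
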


We particularize to the case of the cocycles that arise in the $\Gamma$-
equivariant Berezin quantization.

\begin{obs} Using a principal branch for the logarithmic function, let $a(z,\eta)$ be the argument of $(z-\overline{\eta})$, $z,\eta \in \Bbb H$.
 The cocycle $c^0_t$ introduced in the last 
chapter corresponds~to the Alexander Spanier cocycle
$$
\Theta_0(\eta_1,\eta_2,\eta_3)=-\frac{1}{2}\frac{\chi'_t}{\chi_t}+
i[a(\eta_1,\eta_3)-a(\eta_1,\eta_2)-a(\eta_2,\eta_3)].
$$
In particular, $c_t^0(k,l)=c_{\Theta_0}(k,l)$. Moreover, because of  3) in the previous statement, we have that
  $$c_t^0(k,l)=c_{\Theta_0}(k,l)=Z^t(kl)-kZ^t(l)-Z^t(k)l,$$
   for kernels $k,l$ representing operators in the domain of $c_t^0$.
Here $Z^t$ is the operator $$-\frac{1}{2}\frac{\chi'_t}{\chi_t}{\rm \ Id}+\T^{t,\Gamma}_{i\varphi_0}.$$
Recall  that $$\varphi_0(z,\zeta)=a(z,\zeta)+\arg\Delta(z)-
\arg\Delta(\zeta), z, \zeta\in \bH.$$

Here we are using a principal, analytic branch of the logarithm of the non-zero function $\Delta$, and by 
$\arg \Delta$ we denote the imaginary part of the logarithm.
\end{obs}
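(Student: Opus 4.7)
The plan is to reconcile the two explicit descriptions of $c_t^0$: the kernel-level computation from the introduction, and the abstract Alexander--Spanier picture developed in the preceding proposition.

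I would start from the explicit formula for $c_t(k,l)(z,\overline{\zeta})$ given in the introduction, and split the logarithm of the four-point function as $\ln|\cdot|+i\arg(\cdot)$. The real part, depending only on moduli $|w-\overline{w'}|$, reproduces exactly the Dirichlet form coboundary $R_t(kl)-kR_t(l)-R_t(k)l$ that is subtracted off according to Definition \ref{im}, leaving in $c_t^0$ only the imaginary contribution. Using
$$[z,\overline{\eta},\eta,\overline{\zeta}]=\frac{(z-\overline{\zeta})(\eta-\overline{\eta})}{(z-\overline{\eta})(\eta-\overline{\zeta})}$$
and the principal branch, I would expand
$$\arg[z,\overline{\eta},\eta,\overline{\zeta}]=a(z,\zeta)-a(z,\eta)-a(\eta,\zeta)+\arg(\eta-\overline{\eta}),$$
with the last term being the constant $\pi/2$. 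This constant contribution, combined with the scalar prefactor $\chi'_t/\chi_t$ already present in the formula for $c_t$ and with the residual scalar from the Dirichlet subtraction, collapses to the coefficient $-\tfrac12\chi'_t/\chi_t$ appearing in $\Theta_0$ (matching the scalar already identified for $\L_t^0$ in Theorem \ref{ind}). The three remaining terms $a(z,\zeta)-a(z,\eta)-a(\eta,\zeta)$ are precisely the Alexander--Spanier coboundary $a(\eta_1,\eta_3)-a(\eta_1,\eta_2)-a(\eta_2,\eta_3)$ with $\eta_1=z$, $\eta_2=\eta$, $\eta_3=\zeta$. After a standard reproducing-kernel reduction that collapses the triple integral of part 2 of the preceding Proposition to the single $\eta$-integral in the formula for $c_t$, this identifies $c_t^0=c_{\Theta_0}$.

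To realize the coboundary, I would then invoke part 3 of the preceding Proposition with $\phi(z,\zeta)=ia(z,\zeta)$, which expresses $c_{\Theta_0}$ as the Hochschild coboundary of $X_{ia}=\T^{t,\Gamma}_{ia}$ up to the scalar piece. The snag is that $a(z,\zeta)=\arg(z-\overline{\zeta})$ is not $\Gamma$-diagonally invariant, so $\T^{t,\Gamma}_{ia}$ does not descend to $L^2(\A_t)$. The remedy exploits the telescoping identity
$$[\arg\Delta(\eta_1)-\arg\Delta(\eta_3)]-[\arg\Delta(\eta_1)-\arg\Delta(\eta_2)]-[\arg\Delta(\eta_2)-\arg\Delta(\eta_3)]=0,$$
which allows one to replace $\phi$ by $i\varphi_0=i[a(z,\zeta)+\arg\Delta(z)-\arg\Delta(\zeta)]$ without changing $\Theta_0$. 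The transformation law $\Delta(\gamma z)=j(\gamma,z)^{12}\Delta(z)$ produces exactly the correction needed to cancel the non-invariance of $a$ under $\Gamma$, making $\varphi_0$ genuinely $\Gamma$-diagonally invariant and $\T^{t,\Gamma}_{i\varphi_0}$ well-defined on $L^2(\A_t)$. Combined with the scalar $-\tfrac12\,\chi'_t/\chi_t\,{\rm Id}$ from the constant part of $\Theta_0$, this produces $Z^t$, and $c_t^0(k,l)=Z^t(kl)-kZ^t(l)-Z^t(k)l$ follows.

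The main obstacle I anticipate is the bookkeeping of scalar constants: the $\pi/2$ produced by $\arg(\eta-\overline{\eta})$, the explicit prefactor $\chi'_t/\chi_t$ in the kernel formula for $c_t$, and the constant left over from the Dirichlet subtraction must combine consistently to the single coefficient $-\tfrac12\chi'_t/\chi_t$ already identified in Theorem \ref{ind}. Verifying the $\Gamma$-invariance of $\varphi_0$ requires keeping careful track of the branch cuts in $\arg\Delta$ under modular transformations, but once this is controlled the matching with the Toeplitz operator from the discussion following Theorem \ref{ind} is immediate.
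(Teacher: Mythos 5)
Your proposal is correct and is essentially the paper's own (largely implicit) justification of this Observation: the paper likewise reads off $c_t^0=c_{\Theta_0}$ from the Chapter 1 kernel formula for $c_t$ by splitting $\ln[z,\overline{\eta},\eta,\overline{\zeta}]$ into its real part (which is exactly the $R_t$-coboundary removed in Definition of $c_t^0$, since $\bigl|[z,\overline{\eta},\eta,\overline{z}]\bigr|$ is a function of $d_\bH$) and its imaginary part $a(\eta_1,\eta_3)-a(\eta_1,\eta_2)-a(\eta_2,\eta_3)$ plus a constant, and then invokes part 3) of the preceding Proposition together with the same telescoping identity for $\arg\Delta$ to replace the non-invariant symbol $ia$ by the $\Gamma$-invariant $i\varphi_0$, yielding $Z^t=-\frac12\frac{\chi'_t}{\chi_t}\,{\rm Id}+\T^{t,\Gamma}_{i\varphi_0}$. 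The one caveat, which is an inconsistency of normalization in the paper rather than a flaw in your mechanism: since $\Delta$ has weight $12$, the transformation law $\Delta(\gamma z)=j(\gamma,z)^{12}\Delta(z)$ cancels the non-invariance of $\arg\bigl((z-\overline{\zeta})^{12}\bigr)=12\,a(z,\zeta)$, i.e.\ of the symbol $\varphi_0$ as written in Chapter 1, not of the single $a$ appearing in the Observation's formula, so your invariance step (and the bookkeeping of the scalar constants, which you rightly flag) holds only after restoring that factor of $12$.
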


\begin{obs}
 The cocycle $c_t^0$ makes also sense as a 
densely defined cocycle on $B(H_t)$. It is the cocycle $\tilde{c_t^0}$
associated to the deformation consisting in the family of algebras $B(H_t)_{t>1}$.

In this case
$$
\tilde{c_t^0}(k, l)=\tilde{\Lambda^0_t}(k l)-\tilde{\Lambda^0_t}(k)l-k\tilde{\Lambda^0_t}(l),
$$
where $\tilde{\Lambda^0_t}=-\frac{1}{2}\frac{\chi'_t}{\chi_t}+iQ_a$, and 
the domain of $\tilde{\Lambda^0_t}$ and $\tilde{c_t^0}$  is the domain of $Q_a$. Here $Q_a$ is the unbounded operator introduced in Definition \ref {gentop}, and the function $a$ was defined in the previous definition.
Note that in this case $\tilde{\Lambda^0_t}(1)=0$, by Property $7)$,
from the previous statement.
\end{obs}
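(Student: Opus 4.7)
The plan is to verify three assertions in sequence: (i) the integral formula defining $c_t^0$ extends to a densely defined $2$-cocycle $\tilde c_t^0$ on $B(H_t)$; (ii) this extension is implemented as a Hochschild coboundary by the indicated $\tilde\Lambda_t^0$; (iii) $\tilde\Lambda_t^0(1)=0$.

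For (i), I would observe that the kernel formula for $c_{\Theta_0}(k,l)$ given in part~2) of the preceding Proposition uses only the analytic/antianalytic structure of its inputs and never invokes $\Gamma$-invariance. Consequently, on a dense subspace of $B(H_t)$ consisting of those kernels for which the triple integral converges to the reproducing kernel of a bounded operator on $H_t$, the same formula defines a $2$-Hochschild cocycle $\tilde c_t^0$ whose restriction to the $\Gamma$-equivariant part recovers $c_t^0$.

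For (ii), the key step is to write $\Theta_0$ as the Alexander--Spanier coboundary of the $1$-cochain $\psi(z,\zeta)= \tfrac12\tfrac{\chi'_t}{\chi_t}+ia(z,\zeta)$; a one-line computation gives
$$
\Theta_0(\eta_1,\eta_2,\eta_3) = \psi(\eta_1,\eta_3)-\psi(\eta_1,\eta_2)-\psi(\eta_2,\eta_3).
$$
I would then repeat the telescoping argument from the proof of part~3) of the same Proposition, but pair against nuclear operators $m\in\cC_1(H_t)$ in the sense of Definition \ref{gentop}, rather than against elements of $L^2(\A_t)$. The triple integral for $\tilde c_t^0(k,l)$ then reorganizes as
$$
\tilde c_t^0(k,l) \;=\; Q_\psi(kl)-k\,Q_\psi(l)-Q_\psi(k)\,l.
$$
Linearity of $d\mapsto Q_d$, together with the trivial identity $Q_c=c\,\mathrm{Id}$ for a constant symbol $c$ (immediate from Definition \ref{gentop}), identifies $Q_\psi$ with $\tfrac12\tfrac{\chi'_t}{\chi_t}\mathrm{Id}+iQ_a$, that is, with $\tilde\Lambda_t^0$.

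For (iii), the single ingredient needed is Property~8) of the preceding Proposition, which evaluates $(iQ_a)(1)=-\tfrac12\tfrac{\chi'_t}{\chi_t}\cdot 1$; this cancels against the scalar term of $Q_\psi$ to yield $\tilde\Lambda_t^0(1)=0$.

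The main obstacle is setting up a common dense domain in $B(H_t)$ on which $Q_a$, $Q_\psi$ and the triple integral defining $\tilde c_t^0$ all converge in operator norm, and on which the Fubini interchange underlying the pairing argument is justified. Once such a domain is in place---for instance the linear span of kernels of sufficiently rapid hyperbolic decay, or of finite-rank operators---the coboundary identity is then a purely algebraic consequence of the fact that $\psi$ is an Alexander--Spanier primitive of $\Theta_0$.
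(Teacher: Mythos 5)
Your route is exactly the one the paper intends for this Observation, which it asserts without a separate proof: the kernel formula of part 2) of the multilinear Proposition never uses $\Gamma$-invariance, so it extends to a dense domain of kernels in $B(H_t)$; the telescoping identity of part 3) goes through verbatim when one pairs against nuclear operators $\cC_1(H_t)$ as in Definition \ref{gentop} instead of using the trace on $\A_t$; and the value at $1$ comes from the evaluation $(iQ_a)1=-\tfrac12\tfrac{\chi'_t}{\chi_t}1$. One point deserves flagging, because you silently corrected the paper rather than proving its literal display. Your computation is forced: since $\Theta_0=\psi_{13}-\psi_{12}-\psi_{23}$ requires $\psi=+\tfrac12\tfrac{\chi'_t}{\chi_t}+ia$ (a constant picks up a minus sign under the Alexander--Spanier coboundary), the coboundary operator you obtain is $Q_\psi=+\tfrac12\tfrac{\chi'_t}{\chi_t}\,{\rm Id}+iQ_a$, whereas the Observation writes $\tilde{\Lambda^0_t}=-\tfrac12\tfrac{\chi'_t}{\chi_t}+iQ_a$. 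Your sign is the internally consistent one: combined with Property 8) --- which is the correct citation here, not Property 7), since Property 7) concerns symbols depending on a single variable while $a(z,\zeta)=\arg(z-\overline{\zeta})$ depends on both --- only the $+$ sign yields simultaneously the cocycle identity and $\tilde{\Lambda^0_t}(1)=0$; with the printed minus sign one would get $\tilde{\Lambda^0_t}(1)=-\tfrac{\chi'_t}{\chi_t}1\neq 0$, and the implementing identity would be off by the scalar term $\tfrac{\chi'_t}{\chi_t}\,kl$. So your argument is correct and in effect identifies a sign typo in the statement, of a piece with the paper's fluctuating conventions for $\lambda_t$ elsewhere. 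Your closing caveat about a common dense domain where the triple integral, $Q_a$, and the Fubini interchange all make sense is at the right level of care; the paper itself simply stipulates the domain of $Q_a$ and leaves these convergence issues implicit, so nothing more is demanded of you there.
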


Recall that the modular factor for an element $\gamma \in \Gamma$, with
$$\gamma=\begin{pmatrix}
a& b \\ c & d \end{pmatrix},\ \ \  a,b,c,d\in \Bbb Z,$$
is $$j(\gamma, z)= (cz+d),\ \  z\in \bH.$$
Since $j(\gamma, z)\ne 0$, for all $z$, we choose a 
an analytic branch of the logarithm and taking the imaginary part we define $\arg(j(\gamma, z)), z\in \bH.$

\begin{cor} Any densely defined derivation $D$ on $B(H_t)$, or on
$\A_t$ that has the property that the unbounded operator $\tilde{Z_t}=D+\tilde{\Lambda^0_t}$ maps a dense $\ast$-subalgebra $\tilde {\D}$ of
$\A_t$ into $\A_t$ $($so that, on its domain intersected with $L^2(\A_t)$, the operator$\tilde{Z_t}$
implements the cocycle $c^0_t)$, is a solution to the $1$-cohomology {\rm ([Pe], [HaVa], [Sau])} problem
$$
\gamma D-D=-i[T^t_{\arg(j(\gamma,z)}, \cdot\,]=K(\gamma),\quad \gamma\in \Gamma.
$$

In the above formula, by $\gamma D$ we denote the derivation defined by the formula 
$(\gamma D)(k)= \pi_t(\gamma)D(k)\pi_t(\gamma)$,
$k$ in the domain~of~$D$. Here $T^t_{\arg(j(\gamma,z)}$ is the Toeplitz operator acting on $H_t$ with symbol  $\arg(j(\gamma,z)$.

Note that $K$ is $1$-cocycle for the group $\Gamma$, ([HV],[Pe]) with values in the bounded 
derivations on $B(H_t)$.
The condition that $K$ is 1-cocycle is that $$K(\gamma_1\gamma_2)=\gamma_1 K(\gamma_2)+K(\gamma_2),\gamma_1,\gamma_2\in\Gamma.$$
 We are therefore looking for a derivation that 
implements this $1$-cocycle.

\end{cor}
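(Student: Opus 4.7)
The strategy is to translate the geometric condition $Z_t(\tilde\D) \subseteq \A_t$ into a $\Gamma$-equivariance identity for the super-operator $Z_t$, and then to identify the resulting mismatch between $\tilde\Lambda^0_t$ and its $\gamma$-translate with the claimed commutator using Property 7 of the preceding Proposition.

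\textbf{Step 1 (equivariance $\Rightarrow$ cocycle equation).} Since $\A_t = \{\pi_t(\Gamma)\}'$, for every $k \in \tilde\D \subseteq \A_t$ and $\gamma \in \Gamma$ both $k$ and $Z_t(k)$ commute with $\pi_t(\gamma)$. Writing $\gamma \cdot T := \mathrm{Ad}_{\pi_t(\gamma)} \circ T \circ \mathrm{Ad}_{\pi_t(\gamma)^{-1}}$ for the induced action on super-operators, the two identities $\mathrm{Ad}_{\pi_t(\gamma)}(Z_t(k)) = Z_t(k)$ and $Z_t(\mathrm{Ad}_{\pi_t(\gamma)}(k)) = Z_t(k)$ give $(\gamma \cdot Z_t)(k) = Z_t(k)$ on $\tilde\D$, hence
\begin{equation*}
\gamma \cdot D - D \;=\; \tilde\Lambda^0_t - \gamma \cdot \tilde\Lambda^0_t \quad \text{on } \tilde\D.
\end{equation*}
The 1-cocycle property $K(\gamma_1\gamma_2) = \gamma_1 \cdot K(\gamma_2) + K(\gamma_1)$ is then automatic, as any map of the form $\gamma \mapsto \gamma \cdot D - D$ is a 1-cocycle.

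\textbf{Step 2 (symbol computation for $\gamma \cdot Q_a - Q_a$).} The scalar summand of $\tilde\Lambda^0_t = -\tfrac12\tfrac{\chi'_t}{\chi_t}\mathrm{Id} + iQ_a$ is $\Gamma$-fixed, so only $\gamma \cdot Q_a - Q_a$ contributes. I would unwind the pairing definition of $Q_a$ and use that conjugation by $\pi_t(\gamma)$ acts on kernels by pullback along $\gamma^{-1}$ together with the automorphy factor $j(\gamma,\cdot)^t$; these factors are exactly absorbed by the Bergman weight $(z-\bar\eta)^{-t}$ in the definition of $Q_a$, and the weight $d_\bH^t\,d\nu_0^2$ in the pairing is $\Gamma$-invariant. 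Consequently $\gamma \cdot Q_a$ is the super-Toeplitz operator whose symbol is $a$ pulled back by $\gamma$. From the Möbius identity
\begin{equation*}
\gamma z - \overline{\gamma\zeta} \;=\; \frac{z - \bar\zeta}{j(\gamma,z)\,\overline{j(\gamma,\zeta)}},
\end{equation*}
taking arguments (with the chosen branch) gives
\begin{equation*}
a(\gamma z,\gamma\zeta) - a(z,\zeta) \;=\; \arg j(\gamma,\zeta) - \arg j(\gamma,z) \pmod{2\pi},
\end{equation*}
so $\gamma \cdot Q_a - Q_a = Q_{d_\gamma}$ where $d_\gamma(z,\zeta)$ is a function of $\zeta$ alone minus a function of $z$ alone.

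\textbf{Step 3 (reduction to a commutator).} Property 7 of the preceding Proposition asserts that $Q_{d(z)}(k) = T^t_d \cdot k$ when the symbol depends only on the first variable, and $Q_{d(\zeta)}(k) = k \cdot T^t_d$ when it depends only on the second. Applied to $d_\gamma$, this converts $Q_{d_\gamma}$ into a commutator with the ordinary Toeplitz operator $T^t_{\arg j(\gamma,\cdot)}$ on $H_t$. Combining with the factor $i$ from $iQ_a$ and the sign from Step 1 produces the stated identity $\gamma \cdot D - D = -i[T^t_{\arg j(\gamma,\cdot)},\,\cdot\,] = K(\gamma)$.

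\textbf{Main obstacle.} The delicate point is Step 2: verifying with precision that conjugation by $\pi_t(\gamma)$ on the super-operator $Q_a$, which is defined abstractly via a pairing with $\cC_1(H_t)$ against the weight $d_\bH^t\,d\nu_0^2$, corresponds exactly to the naive symbol pullback $a \mapsto a(\gamma\cdot,\gamma\cdot)$. The non-trivial automorphy factors $j(\gamma,\cdot)^t$ arising from both the kernels of operators in $B(H_t)$ and the Bergman weights $(z-\bar\eta)^{-t}$ must be tracked and shown to cancel; it is only this cancellation, together with the $\Gamma$-invariance of the measure, that isolates the contribution $\arg j(\gamma,\cdot)$ needed for the Toeplitz commutator. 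Once this bookkeeping (including the branch choice for the argument and the precise sign convention in the definition of $\gamma \cdot$) is completed, the remaining algebra is straightforward.
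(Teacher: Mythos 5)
Your proposal is correct and follows essentially the same route as the paper: the paper's one-line proof rests precisely on the fact that $\gamma\tilde{\Lambda^0_t}-\tilde{\Lambda^0_t}$ is the operator of Definition \ref{gentop} with symbol $i(\arg j(\gamma,z)-\arg j(\gamma,\zeta))$, which your Steps 2--3 verify in detail via the M\"obius identity and Property 7), while your Step 1 makes explicit the equivariance argument the paper leaves implicit. As a minor point, your cocycle identity $K(\gamma_1\gamma_2)=\gamma_1 K(\gamma_2)+K(\gamma_1)$ is the correct one, fixing an apparent typo in the paper's statement $K(\gamma_1\gamma_2)=\gamma_1 K(\gamma_2)+K(\gamma_2)$.
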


\begin{proof} 
This is simply a consequence of the fact that 
$\gamma\tilde{\Lambda_0^t}-\tilde{\Lambda_0^t}$
has the property that $\gamma\tilde{\Lambda_0^t}-\tilde{\Lambda_0^t}$ is a Toeplitz operator of the form introduced in Definition \ref{gentop}  corresponding to the 
unbounded symbol $i(\arg(j(\gamma,z))-\arg(j(\gamma,\zeta))$, $z,\zeta \in \bH$.
Consequently, the coboundary operators  for the Hochschild 2-cocycle
$c^0_t$ are in one to one correspondence with the derivations implementing the 1-cocycle $K$  on $\Gamma$, with values in bounded derivations.
\end{proof}

\begin{rem}
The canonical  solution $D_0$  of the 1-cocycle problem $\gamma D-D=K(\gamma)$
is in fact the derivation $D_0=[T^t_{i\arg\Delta},\cdot]$, where $T^t_{i\arg\Delta}$ is the Toeplitz operator with symbol $i\arg\Delta$ acting on $H_t$.
This corresponds to the canonical solution we constructed in 
Chapter $1$. Moreover, if $D$ is any other derivation, with the
same domain as $D_0$, then if $D=[T_{\sqrt{-1}d},\cdot]$, thus if $D$ is 
given as the form $Q_{i(d(z)-d(\zeta))}$ for some measurable real valued function 
$d$ on $\bH$, then the function on $\bH$ defined by $d(z)-\arg\Delta(z)$,
$z\in \bH$, is a $\Gamma$-invariant function,
and hence $T^t_d-T^t_{\arg\Delta(z)}$ is affiliated to $\A_t$.
\end{rem}

\begin{proof} The fact that $D_0$ is a solution to the 1-cocycle 
problem is simply a consequence of the fact that
$\arg\Delta(\gamma z)-\arg\Delta(z)=\arg(j(\gamma,z))$.

On the other hand, if $D$ is any other derivation on the same domain
as $[M_{\arg\Delta(z)},\cdot]$  that verifies the 1-cohomology
problem, and $D$ is of the form $T_{i(d(z)-d(\zeta))}$, then for any
$\ z, \zeta\in \Bbb H, \gamma \in \Gamma$, we have
$$d(\gamma z)-d(\gamma\zeta)-[d(z)-d(\zeta)]=\arg(j(\gamma,z))-\arg(j(\gamma,\zeta).$$ 
Because $\Gamma$ has no characters, this implies that $d(z)-\arg\Delta(z)$
is $\Gamma$-invariant.
\end{proof}

Finally, we recall  that the domain of the form $\L^0_t$, introduced in
Chapter~1,  is in fact the domain of the generator semigroup of unbounded
maps $\Phi^t_\varepsilon$ on $\A_t$.
Recall that $\Phi^t_\varepsilon(k)$ is the operator with symbol
$$
k(z,\zeta)\Delta(z)\overline{\Delta(\zeta)}{({(\overline{z}-
\zeta)}^{12})}^\varepsilon \frac{\chi_{t-\varepsilon}}{\chi_t},\quad \varepsilon>0.
$$
As proved in [Ra2], this generator, has a dense,  $\ast-$algebra domain $\D^0_t\subseteq L^2(\A_t)$.

 In fact our computation proves that this domain contains the intersection between the domain
$\D([T^t_{\ln\Delta},\cdot])\subseteq B(H_t)$ of the unbounded derivation $[T^t_{\ln\Delta},\cdot]$ and the domain $\D(\T^t_{\arg(1-z\overline{\zeta})})$ of the Toeplitz operator with symbol $\arg(1-z\overline{\zeta})$ considered in Definition \ref{gentop}.

We consider  the problem of finding derivations $Q$ as in Definition \ref{gentop}, densely
defined on a dense *-algebra domain $\D^0_t\subseteq \A_t$, so that $Q$ admits an extension to a derivation $\tilde{Q}$ with domain $\D(\tilde{Q})$
containing $\D([T^t_{\ln\Delta},\cdot])$, and taking value $0$ on $\A_t'$, that is $\tilde{Q}(k\pi_t(\gamma)=
\tilde{Q}(k)\pi_t(\gamma)$,  for $k\in \D(\tilde{Q})$,  $\gamma \in \Gamma$.

We first prove the following

\begin{prop}
Assume $Q=Q_d$  is a Toeplitz 
operator as in Definition \ref{gentop}, having as  symbol, a measurable function $d=d(z,\zeta)$ on
$\bH\times\bH$, i.e. 
$$
\langle Q_d k,m\rangle=\chi_t\iint_{\bH^2}{d(z,\zeta)k(z,\overline{\zeta})\overline{m(z,\overline{\zeta})}
d_\bH^t(z,\zeta)d\nu^2_0(z,\zeta)}.
$$
We assume that $Q$ admits a domain $\D$, that is  a dense $*$-subalgebra of
$B(H_t)$ so that $\D\cap \A_t$ is dense in $L^2(\A_t)$, and such that 
$Sp\{\pi_t(\gamma)\mid \gamma\in \Gamma\}$ is contained in $\D$.

Assume that $Q(k\pi_t(\gamma))=Q(k)\pi_t(\gamma)$ and that  $\D$ is sufficiently large so that the span of $k(z,\overline{\zeta})\overline
{m(z,\overline{\zeta})}$, $k\in \D$, $m\in \cC_1(H_t)$ is dense in
the continuous functions $C(\bH\times\bH)$.

 Then $d$ is ($\Gamma\times\Gamma$)-invariant. In particular,
$Q_d(\D\cap\A_t)\subseteq\A_t$.
\end{prop}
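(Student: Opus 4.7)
The plan is to pair the module identity $Q(k\pi_t(\gamma)) = Q(k)\pi_t(\gamma)$ with an arbitrary $m\in\cC_1(H_t)$ in the Hilbert--Schmidt trace pairing, translate the resulting identity into one involving the symbol $d$, and then extract $\Gamma$-invariance via the density hypothesis.

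\textbf{Pairing step.} By the (projective) unitarity of $\pi_t(\gamma)$ on $\cC_2(H_t)$,
$$
\Tr\bigl(Q(k\pi_t(\gamma))\,m^*\bigr)=\Tr\bigl(Q(k)\,\pi_t(\gamma)\,m^*\bigr)=\Tr\bigl(Q(k)\,(m\pi_t(\gamma)^*)^*\bigr).
$$
Substituting the defining formula for $Q_d$ from Definition \ref{gentop} on each side converts this into an identity between two integrals of $d(z,\eta)$ against the kernels of $k\pi_t(\gamma),m$ on the left and $k,m\pi_t(\gamma)^*$ on the right.

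\textbf{Change of variables.} Next I make the change of variables $\eta\mapsto \gamma^{-1}\eta$ on the left-hand side. The measure $d_\bH^t\,d\nu_0^2$ is diagonally $\PSL_2(\R)$-invariant and $d\nu_0$ is $\PSL_2(\R)$-invariant individually, while right multiplication by $\pi_t(\gamma)$ acts on kernels through a substitution in the second variable accompanied by explicit factors (the ratio $(z-\overline{\eta})^t/(z-\overline{\gamma\eta})^t$, the modular factor $|j(\gamma,\eta)|^{-2t}$, and the cocycle of the projective representation). After an analogous change on the right-hand side and a further substitution to align the arguments of $k$ and $m$, these factors cancel pointwise in the integrand --- a cancellation which is \emph{forced} by the special case $d\equiv 1$, where the identity must reduce to the projective right-$\pi_t(\gamma)$ invariance of the Hilbert--Schmidt inner product. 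What remains is
$$
\int_{\bH^2}\bigl[d(z,\gamma\eta)-d(z,\eta)\bigr]\,k(z,\overline{\gamma\eta})\,\overline{m(z,\overline{\eta})}\,d_\bH^t\,d\nu_0^2 = 0, \qquad k\in\D,\ m\in\cC_1(H_t).
$$

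\textbf{Density and closing the argument.} By the density hypothesis on the span of $k(z,\overline\zeta)\overline{m(z,\overline\zeta)}$ in $C(\bH\times\bH)$, the bracket vanishes a.e., so $d(z,\gamma\eta)=d(z,\eta)$, i.e.\ $\Gamma$-invariance in the second argument. For the first argument I use that in the ambient setting $Q$ is a derivation: the hypothesis $Q(k\pi_t(\gamma))=Q(k)\pi_t(\gamma)$ together with the Leibniz rule gives $k\,Q(\pi_t(\gamma))=0$ for all $k\in\D$, and density of $\D$ forces $Q(\pi_t(\gamma))=0$. The derivation property then yields the symmetric left-module identity $Q(\pi_t(\gamma)k)=\pi_t(\gamma)Q(k)$, and repeating the preceding argument with left multiplication yields $d(\gamma z,\eta)=d(z,\eta)$. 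Hence $d$ is $(\Gamma\times\Gamma)$-invariant. The final assertion $Q_d(\D\cap\A_t)\subseteq\A_t$ then follows at once, since for $k$ with diagonally $\Gamma$-invariant kernel the integral defining $Q_d(k)(z,\overline\zeta)$ with $d$ bi-$\Gamma$-invariant is itself diagonally $\Gamma$-invariant.

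\textbf{Main obstacle.} The delicate part is the precise bookkeeping of the Jacobian and cocycle factors in the explicit formula for the kernel of $k\pi_t(\gamma)$; these are intricate but are rigidly constrained by the $d\equiv 1$ consistency check, so one only needs to identify the \emph{difference} between the two sides up to these factors, which gives the symbol shift $d(z,\gamma\eta)-d(z,\eta)$.
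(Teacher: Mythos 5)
Your core mechanism is the same as the paper's: the paper rewrites the hypothesis as $Q_d(k\pi_t(\gamma^{-1}))\pi_t(\gamma)=Q_d(k)$, pairs it against rank-one operators with kernel $\overline{v_1(\zeta)}v_2(z)$ (your pairing with general $m\in\cC_1(H_t)$ is the same thing), converts both sides into integrals against $d$, changes variables, and invokes the density hypothesis to get $d(z,\gamma\zeta)=d(z,\zeta)$. Your ``$d\equiv 1$ consistency'' device for killing the automorphy/cocycle factors is a legitimate and rather tidy observation --- those factors do not depend on $d$, so they must cancel exactly as in the invariance of the Hilbert--Schmidt pairing under right multiplication by the unitary $\pi_t(\gamma)$ --- whereas the paper simply writes the transformed integrand with the factors already cancelled, without comment. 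Where you genuinely diverge is the first variable: the paper's proof \emph{ends} at $d(z,\gamma\zeta)=d(z,\zeta)$, i.e.\ it only establishes invariance in the second variable, even though the statement asserts full $(\Gamma\times\Gamma)$-invariance (which is what is actually used later, in Theorem \ref{obstruction}, and is needed even for the ``in particular'' clause, since second-variable invariance alone does not make $Q_d(k)$ diagonally $\Gamma$-invariant). Your repair --- Leibniz plus the hypothesis gives $k\,Q(\pi_t(\gamma))=0$ for all $k\in\D$, density gives $Q(\pi_t(\gamma))=0$, hence the left-module identity $Q(\pi_t(\gamma)k)=\pi_t(\gamma)Q(k)$ and the mirrored computation --- is the natural way to close this gap, and it is available because $Sp\{\pi_t(\gamma)\}\subseteq\D$ is assumed. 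Do note, however, that it uses the derivation property of $Q$, which sits in the surrounding discussion (``finding derivations $Q$ as in Definition \ref{gentop}'') but is not among the proposition's stated hypotheses; as literally stated, the hypotheses yield only second-variable invariance, so your proof is, if anything, more complete than the paper's, at the price of importing that contextual assumption.

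One small technical slip: in your displayed intermediate identity the integrand pairs $k(z,\overline{\gamma\eta})$ with $\overline{m(z,\overline{\eta})}$, but the density hypothesis applies only to aligned products of the form $k'(z,\overline{\eta})\overline{m(z,\overline{\eta})}$. This is harmless --- since $\D$ is an algebra containing $\pi_t(\Gamma)$, you may take $k'=k\pi_t(\gamma^{-1})\in\D$, which is exactly how the paper arranges the integrand --- but the display as written does not match the test-function class you invoke in the final step.
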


\begin{proof} We use this for symbols $m(z,\zeta)$ of the form $\overline{v_1(\zeta)}v_2(z)$,
where $v_1$, $v_2$ are vectors in $H_t$.
We have 
$$\langle Q_d(k)v_1, v_2\rangle=\iint_{\bH^2}{d(z,\zeta)k(z,\overline{\zeta})v_1(\zeta) \overline
{v_2(z)}d_\bH^t(z,\zeta)d\nu^2_0(z,\zeta)}.
$$
Then the identity $Q_d(k\pi_t(\gamma))=Q_d(k)\pi_t(\gamma)$ implies 
$$
\langle Q_d(k\pi_t(\gamma^{-1}))\pi_t(\gamma)v_1,v_2\rangle
=\langle Q_d(k)v_1,v_2\rangle$$
and hence
\begin{gather*}
\iint_{\bH^2}
d(z,\zeta)k(z,\overline{\gamma\zeta})v_1(\gamma^{-1}\zeta)\overline
{v_2(z)}
d_\bH^t(z,\zeta)d\nu^2_0(z,\zeta) =\\
=\iint_{\bH^2}d(z,\zeta)k(z,\overline
{\zeta})v_1(\zeta)\overline
{v_2(z)}d_\bH^t(z,\zeta)d\nu^2_0(z,\zeta).
\end{gather*}
We obtain:
$$
d(z,\gamma\zeta)=d(z,\zeta) 
$$
for all $z, \zeta$ in $\bH$, $\gamma\in \Gamma$.
\end{proof}
We  prove that for a large class of derivations $\delta$,
densely defined on $\A_t$, we can not perturb by $\delta$ the operator
$\lambda_t1+\L^0_t$, so as to get rid of the real part.

\begin{thm}\label{obstruction}
Let $c_t^0$ the unbounded Hochschild cocycle constructed in Chapter 1 ([Ra2]), associated to the $\Gamma-$ invariant Berezin quantization deformation. This cocycle is 
densely defined on the domain $\D^0_t\times \D^0_t $, where $ \D^0_t $ is the
dense $\ast$-subalgebra of $\A_t$ constructed in Corollary 6.6 in [Ra2].

Recall that $c_t^0$ admits a dense coboundary operator $Z_t=\lambda_t1+\L^0_t$ with domain $\D^0_t$, where $\lambda_t>0$ and
$\L_t^0$ an antisymmetric operator with $(\L^0_t)^*1=-\lambda_t 1$ 
(so $\L_t^0$  is not defined at $1$). 

Let $\delta$ be a  densely defined derivation on $\D^0_t\subseteq L^2(\A_t)$, and so that $\delta$
admits an extension $\tilde Q$ to a domain $\tilde{\D}\subseteq B(H_t)$, verifying the conditions of the previous lemma.

 Then the operator $Z_t+\delta$
is not defined at $1$.
\end{thm}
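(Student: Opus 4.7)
The plan is to argue by contradiction, assuming $1 \in \mathrm{dom}(\overline{Z_t+\delta})$. The first step is to pin down the symbolic form of $\delta$. By hypothesis, the extension $\tilde Q$ satisfies the preceding Proposition, so $\tilde Q=Q_d$ for a symbol $d$ that is $\Gamma$-invariant in the second variable; the parallel argument applied to left multiplication by $\pi_t(\gamma)$ (which holds since $\delta$ descends to a derivation on $\A_t$) gives invariance in the first variable as well. The derivation identity combined with Property~5 of the Toeplitz proposition forces $d(z,\zeta)=d_1(z)-d_1(\zeta)$ for some measurable $d_1$, and the resulting bi-invariance then makes $d_1(\gamma z)-d_1(z)$ independent of $z$, i.e.\ a homomorphism $\Gamma\to\C$. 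Since $\Gamma=\PSL_2(\Z)$ has finite abelianization $\Z/6\Z$, every such homomorphism is trivial, and hence $d_1$ is itself $\Gamma$-invariant.

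The second step combines this with the explicit form of $Z_t$ from the Observation preceding the theorem: $Z_t=\lambda_t\,\mathrm{Id}+Q_{i\varphi_0}$ with $\varphi_0(z,\zeta)=a(z,\zeta)+\arg\Delta(z)-\arg\Delta(\zeta)$ and $a(z,\zeta)=\arg(z-\overline{\zeta})$. Writing $d_1=ih$ for a real $\Gamma$-invariant function $h$ (the $*$-derivation case; the general case is analogous after separating real and imaginary parts), the operator $Z_t+\delta$ is the Toeplitz operator with symbol
\[
\lambda_t+i\bigl[a(z,\zeta)+g(z)-g(\zeta)\bigr],\qquad g:=\arg\Delta+h.
\]
Because $h$ is $\Gamma$-invariant, one has $g(\gamma z)-g(z)=\arg\Delta(\gamma z)-\arg\Delta(z)=12\arg j(\gamma,z)$ for every $\gamma\in\Gamma$. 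Thus the $\Gamma$-cochain $g$ produced by any admissible $\delta$ has coboundary equal, up to the factor $12$, to the Euler $1$-cocycle: no $\delta$ of the allowed class can remove it.

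The final step is the cohomological obstruction. By Theorem~\ref{ind} and the deficiency-index computation, the antisymmetric part of $Z_t+\delta$ having $1$ in the closed domain is equivalent to the defect $(\mathcal L_t^0+\delta)^*1=-\lambda_t 1$ being cancelled, which amounts to the existence of a bounded (in the $L^2(\A_t)$-sense dictated by the Toeplitz formula) trivialization of $12\arg j$ as a coboundary on $\Gamma$. By Step~1 the only freedom is modification of $\arg\Delta$ by a $\Gamma$-invariant $h$, and this leaves the $\Gamma$-cocycle class of $\arg j$ unchanged. The associated $2$-cocycle is the Euler class, which by Brooks' theorem is non-trivial in $H^2_{\rm bd}(\Gamma,\Z)$, yielding the required contradiction.

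The main obstacle is the precise analytic translation in Step~3: identifying ``$1$ lies in the closed domain of the Toeplitz operator with symbol $i[a(z,\zeta)+g(z)-g(\zeta)]$'' with boundedness in the cohomological sense of $H^2_{\rm bd}$. This requires matching the $L^2(\A_t)$ graph-closure condition against the Banach-space boundedness appearing in bounded cohomology, using the deficiency-index structure $(0,1)$ of $Y_t$ as the bridge; the remaining steps are formal.
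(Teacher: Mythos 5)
Your outline does follow the paper's overall strategy---use the preceding proposition to force $\delta$ to be a Toeplitz operator $\T^{t,\Gamma}_d$ with $(\Gamma\times\Gamma)$-invariant symbol, then show that the Euler-cocycle obstruction carried by $\arg\Delta$ survives any such perturbation---but two of your steps are genuinely gapped. First, in Step 1 you claim the derivation identity ``forces'' $d(z,\zeta)=d_1(z)-d_1(\zeta)$, citing Property 5 of the Toeplitz proposition. That property is only a sufficiency statement (symbols of difference form yield derivations); neither it nor Property 3 gives the converse, and you offer no argument for necessity. The paper never needs this reduction: its proof of the theorem uses only the $(\Gamma\times\Gamma)$-invariance of $d$, i.e.\ that $d$ is $\Gamma$-periodic in the $\zeta$-variable. (Your difference-form reasoning, including the use of the finite abelianization of $\PSL_2(\Z)$, parallels instead the paper's separate Remark about the canonical solution $D_0=[T^t_{i\arg\Delta},\cdot\,]$, where the hypothesis $d(z,\zeta)=i(d(z)-d(\zeta))$ is \emph{assumed}, not derived.)

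Second, and more seriously, your Step 3---translating ``$1$ lies in the closed domain of the Toeplitz operator with symbol $i[a(z,\zeta)+g(z)-g(\zeta)]$'' into triviality in bounded cohomology---is exactly the analytic heart of the theorem, and you explicitly leave it unproved (``the main obstacle\dots the remaining steps are formal''). The paper fills this in concretely and by a different, more elementary mechanism: fix the fundamental domain $F$, choose the bounded branch $\arg_0\Delta$ with values in $(-\pi,\pi]$, and write $\arg\Delta-\arg_0\Delta=\sum_\gamma\theta_\gamma\chi_{\gamma F}$ with constants $\theta_\gamma$. The quadratic form of $Z_t+\delta$ on $L^2(\A_t)\cong H^2(F\times\bH, d_\bH^t\,{\rm d}\nu_0^2)$ then splits into a bounded form, coming from $\arg_0\Delta(z)-\arg_0\Delta(\zeta)+\arg(z-\overline{\zeta})$, plus an unbounded form with kernel $\sum_\gamma\theta_\gamma\chi_{\gamma F}(\zeta)-d(z,\zeta)$ (the $z$-part of the step function is harmless since $z$ ranges over $F$). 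Because $(\theta_\gamma)$ is the unique coboundary of the Euler cocycle and increases rapidly to $\infty$---this is where the Brooks-type nontriviality enters, as a concrete growth statement about the Rademacher-type function rather than as an abstract appeal to $H^2_{\rm bd}(\Gamma,\Z)$---while $d$ is $\Gamma$-periodic in $\zeta$, no admissible $d$ can cancel this growth, and testing against $k=1$ shows $1$ is not in the domain of the form closure. Without an argument of this kind (or the ``bridge'' through the deficiency indices $(0,1)$ that you promise but do not construct), your proposal remains a plan rather than a proof.
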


\begin{proof} Because of the previous lemma, $\delta$ is of the form $\T_d^{t,\Gamma}$ with $d$ a function on
$\bH\times\bH$ that is $(\Gamma\times\Gamma)$-invariant.

We let $F$ be the canonical  fundamental domain  for the action of $\Gamma$ on $\bH$. We select a function, denoted $\arg_0\Delta(z)$, which we assume to be 
 a measurable choice of the argument of $\Delta(z)$,  taking values interval $(-\pi,\pi]$ and so that the function $\arg\Delta(z)- \arg_0\Delta(z), z\in \bH$, is almost everywhere constant, on the  translates of $F$ by elements in the group $\Gamma$.
 
  Then, using  the 
identification  of the Hilbert space $L^2(\A_t)$ with the Hilbert space $H^2(F\times\bH, d_\bH^t {\rm d}\nu^2_0)$, we observe that the bilinear form associated to the operator $\T^{t,\Gamma}_\varphi$, where $$\varphi(z,\zeta)=\arg(z-\overline{\zeta})+\arg\Delta(z)-
\arg\Delta(\zeta),$$ breaks into the bounded linear form
$$
\iint_{F\times\bH}{\left[\arg_0\Delta(z)-\arg_0\Delta(\zeta)+\arg(z-\overline{\zeta})\right]
k(z,\overline{\zeta})\overline{l(\overline{z},\zeta})d_\bH^t(z,\zeta)d\nu^2_0(z,\zeta)}
$$
and the unbounded bilinear form
$$
\iint_{F\times\bH}{\Big(\sum \theta_\gamma \chi_{\gamma F}(z)- \sum \theta_\gamma \chi_
{\gamma F}(\zeta)}\Big)k(z,\overline{\zeta})\overline{l(\zeta,\overline{z})}d_\bH^t(z,\zeta)d\nu^2_0(z,\zeta).
$$
In this expression, we have $\theta_\gamma= \arg\Delta(z)-\arg_0\Delta(z)$, for $z\in \gamma F$, $\gamma \in \Gamma$. Hence
$$\sum \theta_\gamma \chi_{\gamma F}(z)= \arg\Delta(z)- \arg_0\Delta(z),\ \  z\in \bH.$$

Hence if we consider the unbounded  bilinear form associated to 
$$\langle(Q_d+\L_t^0)k,l\rangle_{L^2(\A_t)},$$ then the unbounded part is 
$$
\iint_{F\times \bH} {\Big(\sum \theta_\gamma \chi_{\gamma F}(\zeta)-d(z,\zeta)\Big)
k(z,\overline{\zeta})\overline{l(\zeta,\overline{z})}d_\bH^t(z,\zeta)d\nu^2_0(z,\zeta)}.
$$
But $\theta_\gamma$ is the unique (unbounded) 1 cocycle coboundary  of the Euler cocycle, which is 
rapidly increasing to $\infty$, and $d$ is $(\Gamma\times\Gamma)$-invariant,
and hence this expression can not  contain the element $k=1$ in the domain of its form closure.
\end{proof}

We finally note that on the von Neumann algebra $L(F_\infty)$, associated to the free group $F_\infty$ with infinitely many generators, there exists a closed derivation $\delta$, 
so that $\Re\delta=\alpha, \alpha>0$. In addition there is an abelian algebra
$L^\infty([0, 1])$ invariated by $\delta$, and so that $\Im\delta$ has deficiency indices (0, 1) when restricted  to the Hilbert space of the smaller algebra, and has the same indices on the Hilbert space associated to  the full algebra.

\begin{prop}\label{infinite}We use the description of $L(F_\infty)$  as the infinite corner 
$\chi_{[0, 1]}(L^\infty([0,\infty])
\star\bC(X))\chi_{[0, 1]}$ introduced in {\rm [Ra3]}.

Here $X$ is an ``infinite semicircular" element (see {\rm [Ra3]}), and $\alpha_t$
acts on $L^\infty([0, \infty))\star \bC(X)$ by dilation on $L^\infty([0, \infty])$ 
and multiplying $X$ by $t^{-1/2}$.

Let $\D\subseteq \chi_{[0, 1]}(L^\infty([0, \infty))\star \bC(X))\chi_{[0, 1]}$ 
be the domain generated by 
the span of $f_0 X f_1 X\ldots Xf_{n-1}Xf_n$, 
where $f_0, f_n\in C_0^1([0, 1))$ and $f_1,\ldots, f_n$ 
are differentiable with compact support.

Then $\delta=\frac{d}{d_t}\chi_{[0, 1]}\alpha_t(x)\chi_{[0, 1]}$ is a 
derivation on $\D$, closable, with
$\delta^*1=\alpha>0$.
Moreover $\delta$ invariates a dense domain in $L^\infty([0, 1])$.
\end{prop}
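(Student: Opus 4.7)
The plan is to realize $\delta$ as the restriction, to a carefully chosen subalgebra in the corner $\chi_{[0,1]}M\chi_{[0,1]}$, of the infinitesimal generator of the trace-scaling one-parameter group $\alpha_t$ on the semifinite algebra $M=L^\infty([0,\infty))\star\C(X)$, and then to exploit the scaling identity $\tau_\infty\circ\alpha_t=t\,\tau_\infty$ (which holds because $\alpha_t$ dilates Lebesgue measure by $t$ and scales $X$ by $t^{-1/2}$ in a manner compatible with free independence).

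The first and most delicate step will be to verify that for every word $w=f_0Xf_1X\cdots Xf_n\in\D$ the cut $\chi_{[0,1]}\alpha_t(w)\chi_{[0,1]}$ actually equals $\alpha_t(w)$ for all $t$ in a small neighborhood of $1$. Since $f_0,f_n\in C_0^1([0,1))$ are compactly supported in some $[0,1-\varepsilon]$, for $|t-1|$ small enough the dilated functions $\alpha_t(f_0),\alpha_t(f_n)$ are still supported in $[0,1)$, so $\chi_{[0,1]}\alpha_t(f_0)=\alpha_t(f_0)$ and $\alpha_t(f_n)\chi_{[0,1]}=\alpha_t(f_n)$. Since $\chi_{[0,1]}$ appears only at the far ends of the word, it is absorbed, and $\delta(x)=\frac{d}{dt}\alpha_t(x)|_{t=1}$ is simply the restriction to $\D$ of the generator $\delta_\infty$ of $\alpha_t$ on $M$.

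From here the Leibniz rule is free: $\D$ is closed under multiplication (the new interior factor $f_ng_0$ is again in $C_0^1([0,1))$), and differentiating $\alpha_t(xy)=\alpha_t(x)\alpha_t(y)$ at $t=1$ gives $\delta(xy)=\delta(x)y+x\delta(y)$. For the adjoint identity, I would normalize so that $\tau_\infty(\chi_{[0,1]})=1$, whence $\tau:=\tau_\infty|_{L(F_\infty)}$ is the trace on the corner, and compute
$$
\langle\delta(x),1\rangle=\tau(\delta(x))=\tfrac{d}{dt}\tau_\infty(\alpha_t(x))|_{t=1}=\tfrac{d}{dt}(t\,\tau(x))|_{t=1}=\tau(x)=\langle x,1\rangle,
$$
which yields $\delta^*(1)=1=\alpha\cdot 1$ with $\alpha=1>0$. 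For closability, the efficient route is to pass to the rescaled group $\tilde\alpha_t:=t^{-1/2}\alpha_t$, which one checks is a strongly continuous group of unitaries on $L^2(M,\tau_\infty)$ (the factor $t^{-1/2}$ precisely compensating the scaling of $\tau_\infty$); its Stone generator $\tilde D$ is closed skew-adjoint, so $\delta_\infty=\tilde D+\tfrac12\mathrm{Id}$ is closed, and $\delta$, as the restriction of a closed operator to $\D$, is automatically closable.

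Finally, invariance of a dense subdomain of $L^\infty([0,1])$ is immediate: $C_c^\infty([0,1))$, sitting inside $\D$ as the words of length zero, is weakly dense in $L^\infty([0,1])$, and on it $\delta$ acts as $f(s)\mapsto-sf'(s)$ (read off from $\alpha_t(f)(s)=f(s/t)$), which visibly preserves $C_c^\infty([0,1))$. The hardest part is the very first step: once the domain $\D$ has been tailored so that its outer factors vanish strictly inside $[0,1)$, the cut by $\chi_{[0,1]}$ becomes inert for small times, and everything else reduces to routine manipulations with $\alpha_t$ and the scaling law of $\tau_\infty$.
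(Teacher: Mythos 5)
Your proposal is correct and follows essentially the same route as the paper, whose entire proof is the one-line observation that everything follows from $\alpha_t$ scaling the trace and acting as a dilation on $L^\infty([0,1])$; your argument simply supplies the details the paper leaves implicit (absorption of the corner cuts $\chi_{[0,1]}$ for $t$ near $1$ thanks to the support condition on $f_0,f_n$, the computation $\tau(\delta(x))=\tau(x)$ from $\tau_\infty\circ\alpha_t=t\,\tau_\infty$, and closability via the rescaled unitary group $t^{-1/2}\alpha_t$ and Stone's theorem). No gaps of substance; this is a faithful elaboration of the intended proof.
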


\begin{proof} This is simply a consequence of the fact that $\alpha_t$ is scaling 
the trace, and acts as a dilation on $L^\infty([0, 1])$.
\end{proof}

\

  {\bf Acknowledgement}. The author is indebted to Professors Ryszard Nest and Pierre Bieliavsky for several comments during the elaboration of this paper. The author thanks P. Bieliavsky for providing him with a deep insight into his forthcoming paper ([Bi]). The author is grateful to the referee for pointing out to him several inaccuracies in a first version of this paper.

\end{document}